\DeclareMathOperator\EG{\underline{E}G}
\DeclareMathOperator\EGom{\underline{E}\Gom}
\DeclareMathOperator\stab{Stab}
\DeclarePairedDelimiter\gen{\langle}{\rangle}
\renewcommand*{\th}{\textsuperscript{th}}
\newcommand*{\actsGroup}{\curvearrowright}
\newcommand*{\bOm}{b_\omega}
\newcommand*{\cOm}{c_\omega}
\newcommand*{\dOm}{d_\omega}
\newcommand*{\iso}{\cong}
\newcommand*{\G}{G}
\newcommand*{\Gom}{G_\omega}
\newcommand*{\h}{\mathfrak{h}}
\renewcommand*{\P}{\mathcal{P}}
\newcommand*{\stabOm}{\stab_\omega}
\newcommand*{\V}{\mathcal{V}}
\newcommand*{\X}{\mathcal{X}}
\newcommand*{\GammaMinusZero}{\widetilde{\Gamma}_+}
\newcommand*{\Tbin}{\mathcal{T}_2}
\renewcommand*{\phi}{\varphi}
\renewcommand*{\epsilon}{\varepsilon}
\newcommand*{\field}[1]{\mathbf{#1}}
\newcommand*{\Z}{\field{Z}}
\tikzset{mynode/.style={shape= circle, fill = white, inner sep = 0pt, outer sep = 0pt, minimum size = 4pt,draw}}
\tikzset{blacknode/.style={shape= circle, fill = white, inner sep = 0pt, outer sep = 0pt, minimum size = 4pt,draw}}
\tikzset{empty/.style={shape= circle, fill = white, inner sep = 0pt, outer sep = 0pt, minimum size = 0pt,draw}}
\tikzset{sommet/.style={shape=circle, fill=black,inner sep = 0pt, minimum size = 4pt,label distance=-24pt}}
\tikzset{sommet_blanc/.style={shape=circle, draw=black,fill=white ,inner sep = 0pt, minimum size = 4pt,label distance=-24pt}}
\tikzset{nom/.style={}}
\tikzset{myloop left/.style={loop, out=140, in = -140,min distance =6mm}}
\tikzset{myloop right/.style={loop, out=40, in = -40,min distance =6mm}}
\tikzset{myloop above/.style={loop, out=130, in = 50,min distance =6mm}}
\tikzset{myloop below/.style={loop, out=-50, in = -130,min distance =6mm}}
\tikzset{label/.style={midway, fill=white}}
\tikzset{label_above/.style={above, fill=white}}	
\tikzset{loop_above/.style={out=130,in=50,looseness=40}}
\newcommand*\Colora{red}
\newcommand*\Colorb{blue}
\newcommand*\Colorc{green}
\newcommand*\Colord{orange}
\tikzset{Ra/.style={\Colora}}
\tikzset{Rb/.style={\Colorb}}
\tikzset{Rc/.style={\Colorc}}
\tikzset{Rd/.style={\Colord}}
\theoremstyle{plain}
\newtheorem{lem}{Lemma}[subsection]
\newtheorem{cor}[lem]{Corollary}
\newtheorem{prop}[lem]{Proposition}
\newtheorem{question}[lem]{Question}
\newtheorem{thm}[lem]{Theorem}
\newtheorem{mainthm}{Theorem}
\theoremstyle{definition}
\newtheorem{defn}[lem]{Definition}
\theoremstyle{remark}
\newtheorem{rem}[lem]{Remark}
\title{Proper actions of Grigorchuk groups on a CAT(0) cube complex}
\author{Grégoire Schneeberger}
\begin{document}

\maketitle

\begin{abstract}
In this paper we will present a construction of a CAT(0) cube complex (an infinite cube), on which the uncountable family of Grigorchuk groups $\Gom$ act without bounded orbit. Moreover, if the sequence $\omega$ does not contain repetition, we prove that the action is proper and faithful. As a consequence of this result, this cube complex is a model for the classifying space of proper actions for all the groups $\Gom$ with $\omega$ without repetition. This construction works in a general way for any group acting on a set and which admits a commensurated subset.These examples of non-elliptic actions of infinite finitely generated torsion groups on a non-positively curved cube complex contrast to several established fixed-point theorems concerning actions of torsion groups.

\end{abstract}

\section{Introduction}

CAT(0) cube complexes are nice examples of CAT(0) spaces that share many similarities with trees. Their combinatoric and geometric properties provide useful tools to study the large class of groups which admit non-trivial actions on CAT(0) cube complexes. Groups acting on CAT(0) cube complexes were shown to play an important role in low dimensional topology \cite{Haglund2008,Haglund2012,Wise2012,Agol2013} and in geometric group theory \cite{Niblo1997,Chatterji2010,Cherix2004,Sageev2005,Chatterji2005,Nica2004,Roller1998}. 

In \cite{Grigorchuk1980}, Grigorchuk constructed an infinite finitely generated 2-group, now known as the \emph{first Grigorchuk group}, and showed that this group has a lot of exotic properties. Most notably, it is of intermediate growth and hence amenable, but not elementary amenable. This example was generalized in \cite{Grigorchuk1984} to an uncountable family of groups $\{\Gom\}_{\omega \in \{0,1,2\}^\infty}$, the \emph{Grigorchuk groups}, that have since generated a lot of research. Grigorchuk groups are defined by their action by automorphisms on the infinite binary rooted tree. This action extends naturally to an action by homeomorphisms on the boundary of the tree. Schreier graphs of stabilizers of points in this action have linear structure and have proved an important tool in the study of these groups, see e.g. \cite{Grigorchuk2019,Bartholdi2000,Grigorchuk2018,Vorobets2012}

In \cite{Sageev1995}, Sageev showed that if a finitely generated group has a Schreier graph with more than one end, then one can construct a CAT(0) cube complex on which the group acts without bounded orbit. The way in which the complex and the action are constructed is explicitly described when an almost invariant subset is given, which is the case if a description of a multi-ended Schreier graph is available.

It is well known that Grigorchuk groups have Schreier graphs with 2 ends and hence it is tempting to apply Sageev's construction and to investigate the complex that it gives and the action of $\Gom$ on it for different sequences $\omega$. Recall that the groups $\Gom$ are not quasi-isometric and may have different algebraic and geometric properties, for example, being torsion groups or not. We will show explicitly how all groups $\Gom$ act on an infinite cube $\X$ (isometric to all the $\X_\omega$) without bounded orbits\footnote{It is always possible to find a common cubulation for groups, since every CAT(0) cube complex can be equivariantly embedded into an infinite cube. In other words, all the (countable) cubulable groups act on the (countably) infinite cube.}, see Theorem~\ref{T:bounded_orbit}.

Moreover, we can strengthen this result for an uncountable subfamily of groups $\Gom$, including the self-similar examples $\G_{(012)^\infty}$ (the first Grigorchuk group which is an infinite 2-group) and $\G_{(01)^\infty}$ (which has an element of infinite order), by proving that the actions of these groups on $\X$ are faithful and proper\footnote{Here, an action is called proper if the cubes' stabilizers are all finite.}, see Theorem~\ref{T:Properness_action}. 

All Grigorchuk groups $\Gom$ are amenable and therefore have the Haagerup property \cite{Bekka1995}. It is known that groups with the Haagerup property satisfy the Baum-Connes conjecture \cite{Higson1997}. Recall that this conjecture links the $K$-homology of the classifying space of proper actions of a group $G$ and the $K$-theory of its reduced $C^*$-algebra by conjecturing that the assembly map $\mu_i : RK_i^G( \underline{E}G) \rightarrow K_i (C^*_r(G))$ is an isomorphism for $i = 0,1$, see \cite{Valette2002}. Even if the conjecture is proved for a particular group, it is interesting to be able to describe the isomorphisms $\mu_i$. For this, we need a model for the classifying space of proper actions $\underline{E}G$. Theorem~\ref{T:model} shows that the complex $\X$ gives such a model for all the groups $\Gom$ whose action on $\X$ is proper.

From the more global point of view of the geometric group theory, there are various results which show that purely elliptic actions of finitely generated groups on non-positively curved complexes are automatically elliptic, see \cite{Haettel2022} and the references therein. However, the construction presented in this paper diverges from these results by providing examples of purely periodic actions without fixed points of finitely generated groups.

In reality, this construction applies more generally to groups acting transitively on a set and admitting a commensurated subset, see Theorem\ref{T:main_A}.We shall present this general construction in Section~\ref{S:General}, before applying it to the case of Grigorchuk groups in the reminding sections, see Theorem~\ref{T:bounded_orbit} and Theorem~\ref{T:Properness_action}.


\section{General case}~\label{S:General}
\subsection{CAT(0) cube complexes}

Let us recall some useful facts and fixing notations about cube complexes and group actions. A cube complex is a space obtained by gluing, via isometries, euclidean cubes with edges of length one. This space can be equipped with a metric induced by that of the cubes. If the complex is simply connected and if the link of each cube does not contain the boundary of a triangle which is not filled then the metric verifies the CAT(0) condition, see \cite[Theorem B.8]{Leary2013}. 

A key feature of CAT(0) cube complexes is the existence of hyperplanes. Two edges are said equivalent if they are the opposite sides of a square. An equivalent class of edges is a \emph{hyperplane}. It is also possible to define hyperplanes in a more geometrical way, as the codimension-1 subspace spanned by the midpoints of the edges. In the case of a CAT(0) cube complex, hyperplanes are also CAT(0) cube complexes which split the complex into exactly two connected components called \emph{half-spaces}, see \cite{Sageev1995} for details.

The following proposition will be useful. It is a generalization, without any restriction on dimension or local finiteness, of the fixed point theorems for actions with bounded orbits. 

\begin{prop}\label{P:bounded_orbits_fix_vertex}
Let $G$ be a group acting on a CAT(0) cube complex $X$. If the action has a bounded orbit and no hyperplane-inversion (i.e. no element sends a half space to its complement), then $G$ fixes a vertex of $X$.
\end{prop}

\begin{proof}
It is known that if a group acts with bounded orbits on a CAT(0) cube complex, there exists a cube $Q$ which is stabilized by the action. If the dimension of $Q$ is equal to $0$, we have our fixed point. If the dimension is greater than $0$, let $\h$ be a hyperplane which crosses $Q$ and $\h_+$ be one of the two halfspaces delimited by $\h$. As the action does not contain any inversion, the set of halfspaces $\{g\h_+ : g \in  G \}$ pairwise intersect. Using the Helly property, the set 
\begin{equation*}
Q \cap \bigcap_{g \in G} g\h_+
\end{equation*}
is not empty and provides a proper subcube of $Q$ which is stabilized by the action of $G$. We can find a fixed vertex by iterating the process. 
\end{proof}

\subsection{Commensurated subset and action on a cube complex}

Following Cornulier's work \cite{Cornulier2013}, we will use the fact that the main ingredient of this construction is the existence of a group action on a set that has a commensurated subset.

\begin{defn}
Let $G$ be a group which acts on a set $X$. A subset $Y$ of $X$ is \emph{commensurated} by the action of $G$ if 
\begin{equation*}
\left| gY \triangle Y \right| < \infty
\end{equation*}
for all $g$ in $G$.
\end{defn}

In this case, we will prove the following result

\begin{mainthm}\label{T:main_A}
Let $G$ be a group acting on a set $S$ and $R \subset S$ a subset. Assume that:
\begin{itemize}
\item G acts transitively on $S$.
\item $R$ is commensurated by $G$ and $R$ and $S \setminus R$ are both infinite.
\end{itemize}

Then $G$ acts on an infinite cube $X$ without bounded orbit and every subgroup $H < G$ with bounded orbits fixes a vertex.

Moreover, if
\begin{itemize}
\item $\stab(T)$ is finite for every $T \subset S$ such that $T \triangle R$ is finite,
\end{itemize}
then $G$ acts properly on $X$.
\end{mainthm}
\begin{proof}
We consider $\P(S)$ the set of subsets of $S$ which can be seen as 
\begin{equation*}
\P(S) \iso \{0,1\}^S \iso \left\{ \phi : S \rightarrow \{0,1\} \right\}.
\end{equation*}
This set can naturally be endowed with a cube complex structure. Each subset of $S$ is a vertex and two vertices are connected by an edge if and only if their symmetric difference contains one element. We use this element to label the edge. The hyperplanes consists of the sets of edges with the same label. For every element $s$ of $S$, the associated hyperplane $\h_s$ cuts the complex into two halfspaces $\h^0_s$ and $\h^1_s$. The vertices contained in these halfspaces can be explicitly described 
\begin{equation*}
\h^0_s = \left\{ \phi \in \P(S) : \phi(s) = 0 \right \} \text{ and } \h^1_s = \left\{ \phi \in \P(S) : \phi(s) = 1 \right \}.
\end{equation*}

If $S$ is infinite, the cube complex has infinite dimension and is neither locally finite nor path connected.  

The action $G \actsGroup S$ can be extended to an action $G \actsGroup \P(S)$ via
\begin{equation*}
g\phi(s) = \phi(g^{-1}s).
\end{equation*}

Let us consider $X$ the connected component which contains the vertex associated to the subset $R$. It should be observed that since $R$ and $S \setminus R$ are infinite, the preimage of $0$ and $1$ are infinite for all the vertices of $X$ (viewed as functions). 

As $R$ is commensurated by the action of $G$, the action $G \actsGroup \P(S)$ can be restricted to an action $G \actsGroup X$. Combining the descriptions of the halfspaces, the facts that $G$ acts transitively on $X$ and that the functions associated to the vertices of $X$ are equal to 0 and 1 one infinitely many elements of $S$, we observe that the action does not contain a hyperplane inversion. Then, by the Proposition~\ref{P:bounded_orbits_fix_vertex}, if a subgroup $H$ has a bounded orbit, it fixes a vertex. However, the only vertices which are fixed by the group $G$ are those associated with $\emptyset$ and $S$, but they are not in $X$. Then, the group $G$ acts without bounded orbit.
The second part of the proposition is clear. 
\end{proof}

We will now apply Theorem~\ref{T:main_A} to Grigorchuk groups. To do this, we first need to construct an action on a set and find a set that is commensurated. Then we will show that the action is proper in some cases.

\section{Grigorchuk groups and group action}
\subsection{Grigorchuk groups}
Let $\Tbin$ denote the infinite binary rooted tree. The vertices of this tree can be described as (finite) binary sequences, see Figure~\ref{F:Tbin}. 
\begin{figure}
\centering
\begin{tikzpicture}[scale=0.7]
\node[nom] (E) at (0,0) {$\emptyset$};
\node[nom] (0) at (-2,-2) {$0$};
\node[nom] (1) at (2,-2) {$1$};
\node[nom] (00) at (-3,-4) {$00$};
\node[nom] (01) at (-1,-4) {$01$};
\node[nom] (10) at (1,-4) {$10$};
\node[nom] (11) at (3,-4) {$11$};
\node[nom] (000) at (-3.5,-6) {$000$};
\node[nom] (001) at (-2.5,-6) {$001$};
\node[nom] (010) at (-1.5,-6) {$010$};
\node[nom] (011) at (-0.5,-6) {$011$};
\node[nom] (100) at (0.5,-6) {$100$};
\node[nom] (101) at (1.5,-6) {$101$};
\node[nom] (110) at (2.5,-6) {$110$};
\node[nom] (111) at (3.5,-6) {$111$};
\draw (E) to (0) ;
\draw (E) to (1) ;
\draw (0) to (00) ;
\draw (0) to (01) ;
\draw (1) to (10) ;
\draw (1) to (11) ;
\draw (00) to (000) ;
\draw (00) to (001) ;
\draw (01) to (010) ;
\draw (01) to (011) ;
\draw (10) to (100) ;
\draw (10) to (101) ;
\draw (11) to (110) ;
\draw (11) to (111) ;
\end{tikzpicture}
\caption{The first levels of $\Tbin$}\label{F:Tbin}.
\end{figure}
\begin{defn}
Let $\omega = \omega_1\omega_2\ldots$ be an infinite sequence in $\{0,1,2\}^\infty$. Denote by $\sigma$ the shift on the space $\{0,1,2\}^\infty$. The group $\Gom$ is the subgroup of the automorphisms group of $\Tbin$ generated by the automorphisms $a,\bOm,\cOm,\dOm$ where,
\begin{equation*}
a(0x) = 1x \quad \text{and} \quad a(1x) = 0x
\end{equation*}
for all binary sequences $x$ and
\begin{align*}
\bOm(0x) &= \begin{cases} 0a(x)	& \text{if } \omega_1 \neq 2 \\ 0x		& \text{if } \omega_1 = 2 \end{cases} & \bOm(1x) = 1 b_{\sigma \omega}(x) \\
\cOm(0x) &= \begin{cases} 0a(x)	& \text{if } \omega_1 \neq 1 \\ 0x		& \text{if } \omega_1 = 1 \end{cases} & \cOm(1x) = 1 c_{\sigma \omega}(x) \\
\dOm(0x) &= \begin{cases} 0a(x)	& \text{if } \omega_1 \neq 0 \\ 0x		& \text{if } \omega_1 = 0 \end{cases} & \dOm(1x) = 1 d_{\sigma \omega}(x).
\end{align*}
\end{defn}
The actions of these generators can be represented graphically, see Figure~\ref{F:def_grig_group}.
\begin{figure}
\centering
\begin{tikzpicture}[scale=0.7]
\begin{scope}[xscale=0.7, xshift=7cm, yshift=4cm]
\node[nom] at (0,0.5) {$\emptyset$};
\node[nom] at (-3,-1) {$a$};
\node[sommet] (E) at (0,0) {};
\node[sommet] (0) at (-2,-2) {};
\node[sommet] (1) at (2,-2) {};

\draw (E) to (0) ;
\draw (E) to (1) ;
\draw[bend right, dashed, <->] (0) to (1);
\end{scope}
\begin{scope}[xscale=0.7,xshift=0cm]
\node[nom] at (0,0.5) {$\emptyset$};
\node[sommet] (E) at (0,0) {};
\node[sommet] (0) at (-2,-2) {};
\node[sommet] (1) at (2,-2) {};
\node[sommet] (00) at (-3,-4) {};
\node[sommet] (01) at (-1,-4) {};
\node[sommet] (10) at (1,-4) {};
\node[sommet] (11) at (3,-4) {};
\node[nom] at (2,-3.5) {$b_{\sigma \omega}$};
\node[nom] at (-2,-3.5) {$I^2_\omega$};
\node[nom] at (-3,-1) {$b_{\omega}$};
\draw (E) to (0) ;
\draw (E) to (1) ;
\draw (0) to (00) ;
\draw (0) to (01) ;
\draw (1) to (10) ;
\draw (1) to (11) ;
\end{scope}
\begin{scope}[xscale=0.7,xshift=7cm]
\node[nom] at (0,0.5) {$\emptyset$};
\node[sommet] (E) at (0,0) {};
\node[sommet] (0) at (-2,-2) {};
\node[sommet] (1) at (2,-2) {};
\node[sommet] (00) at (-3,-4) {};
\node[sommet] (01) at (-1,-4) {};
\node[sommet] (10) at (1,-4) {};
\node[sommet] (11) at (3,-4) {};
\node[nom] at (2,-3.5) {$c_{\sigma \omega}$};
\node[nom] at (-2,-3.5) {$I^1_\omega$};
\node[nom] at (-3,-1) {$c_{\omega}$};
\draw (E) to (0) ;
\draw (E) to (1) ;
\draw (0) to (00) ;
\draw (0) to (01) ;
\draw (1) to (10) ;
\draw (1) to (11) ;
\end{scope}
\begin{scope}[xscale=0.7,xshift=14cm]
\node[nom] at (0,0.5) {$\emptyset$};
\node[sommet] (E) at (0,0) {};
\node[sommet] (0) at (-2,-2) {};
\node[sommet] (1) at (2,-2) {};
\node[sommet] (00) at (-3,-4) {};
\node[sommet] (01) at (-1,-4) {};
\node[sommet] (10) at (1,-4) {};
\node[sommet] (11) at (3,-4) {};
\node[nom] at (2,-3.5) {$d_{\sigma \omega}$};
\node[nom] at (-2,-3.5) {$I^0_\omega$};
\node[nom] at (-3,-1) {$d_{\omega}$};
\draw (E) to (0) ;
\draw (E) to (1) ;
\draw (0) to (00) ;
\draw (0) to (01) ;
\draw (1) to (10) ;
\draw (1) to (11) ;
\end{scope}
\end{tikzpicture}
\caption{The actions of the generators, where
$
I_\omega^i = 
\begin{cases}
1	& \omega_1 = i \\
a	& \omega_1 \neq i
\end{cases}.
$}\label{F:def_grig_group}
\end{figure}

Let $v$ be a vertex of the $n$\th{} level of the tree and $g \in \Gom$ an element which fix $v$. We can look at the action of $g$ on the subtree rooted at $v$. This subtree is isometric to $\Tbin$ and then, we can define a new automorphism, which is an element of $\G_{\sigma^n \omega}$. This automorphism is called the \emph{restriction of $g$ on $v$} and is denoted by $g_v$. 

An element $g$ \emph{stabilizes the $n$\th{} level} if $g(v) = v$ for every $v$ on the $n$\th{} level. The subgroup of all such elements in $\Gom$ is $\stabOm(n)$, the \emph{stabilizer of the $n$\th{} level}. For an element $g$ in $\stabOm(1)$, we denote by $(g_0,g_1)$ the two restrictions of $g$ on the two subtrees rooted at the first level. If $g$ does not stabilize the first level, we can still write it as $a(g_0,g_1)$. We refer the reader to the section VII of \cite{DelaHarpe2000} and \cite{Grigorchuk2006} for a detailed general exposition of these concepts.

\subsection{Construction of the action}

The action of $\Gom$ can be extended to $\partial \Tbin$, the boundary of the tree. The points of the boundary $\partial \Tbin$ of the tree can be seen as one-sided binary sequences. Consider the orbit of $0^\infty$ and define $\Gamma_\omega$ as the graph with $\Gom 0^\infty$ as vertices and $\{(g0^\infty, sg0^\infty) : g \in \Gom, s \in \{a, \bOm, \cOm, \dOm \}\}$ as edges. This is the orbital graph of the action of $\Gom$ on $0^\infty$. These graphs are not isomorphic if we look at them as labelled\footnote{By labelled graph we mean a graph with edges labelled by generators $\{a,\bOm,\cOm,\dOm\}$} graphs, but the underlying unlabelled graphs are the same.
\begin{prop}
The vertices of the graphs $\Gamma_\omega$ are exactly the infinite sequences with a finite number of $1$. Moreover all the $\Gamma_\omega$ are equal as unlabelled graphs.
\end{prop}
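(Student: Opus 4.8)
The plan is to prove the two assertions separately, but with the same underlying computation. First I would identify which one-sided binary sequences lie in $\Gom 0^\infty$. The key observation is that each generator $a, \bOm, \cOm, \dOm$ changes a sequence only in finitely many coordinates: $a$ flips the first letter, while $\bOm, \cOm, \dOm$ act by reading along the initial run of $1$'s and then possibly applying $a$ deeper down (as encoded by the $I^i_\omega$ symbols in Figure~\ref{F:def_grig_group}). Hence applying any word in the generators to $0^\infty$ produces a sequence differing from $0^\infty$ in only finitely many places, i.e. a sequence with finitely many $1$'s. This shows $\Gom 0^\infty \subseteq \{$sequences with finitely many $1$'s$\}$. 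For the reverse inclusion I would argue by induction on the position of the last $1$: given a sequence $\xi$ whose last $1$ is in position $n$, I would exhibit an explicit generator (or short word) moving $\xi$ to a sequence whose last $1$ is in position $< n$, eventually reaching $0^\infty$; since all generators are involutions this suffices. The natural candidate is to use $a$ to clear a $1$ in the first coordinate, and to use $\bOm$ (or whichever of $\bOm,\cOm,\dOm$ has $I^i_\omega = a$ at the relevant step, which always exists since at most one of the three symbols equals $1$) to reach down past an initial block of $1$'s and flip a deeper letter. One has to check that such a generator is always available regardless of $\omega_1 \in \{0,1,2\}$, which is where the three-fold choice of $b,c,d$ is used.

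For the second assertion — that all $\Gamma_\omega$ coincide as unlabelled graphs — I would show that each $\Gamma_\omega$ is isomorphic to the same fixed graph on the vertex set $V = \{$sequences with finitely many $1$'s$\}$, namely the graph in which $\xi$ is joined to $a\xi$ (flip the first coordinate) and to the sequence obtained by the ``carry'' operation: find the initial maximal block $1^k 0$ of $\xi$ and flip the $(k{+}1)$-st coordinate, i.e. $1^k0\,x \leftrightarrow 1^{k+1}\,x$ if one reads it the other way; more precisely I would describe the non-$a$ edges directly. The point is that for each $\omega$, two of the three generators $\bOm, \cOm, \dOm$ induce the same permutation of $\partial\Tbin$ restricted to the orbit of $0^\infty$ (the two whose $I^i_\omega$ equals $a$ at every level), and the third induces a permutation that differs from the ``carry'' map only by post-composition with edges already present; a direct recursion on $\sigma\omega$ shows the edge set of the underlying graph is independent of $\omega$. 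So $\Gamma_\omega$ has the same edges as the reference graph, giving the claimed equality of unlabelled graphs.

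The main obstacle I expect is the bookkeeping in the induction for the reverse inclusion and in pinning down the non-$a$ edges: the generators $\bOm, \cOm, \dOm$ are defined recursively through $b_{\sigma\omega}, c_{\sigma\omega}, d_{\sigma\omega}$, so computing $\bOm(\xi)$ for $\xi = 1^k 0 x$ requires unwinding $k$ levels of the recursion and tracking which of the symbols $I^i_{\sigma^j\omega}$ get hit. The cleanest way to handle this is to prove by induction on $k$ a closed formula: for a sequence starting with exactly $k$ ones, $\bOm$ (say) fixes the first $k{+}1$ coordinates except possibly flipping the $(k{+}1)$-st, and then passes to a tail action — and to observe that among $b,c,d$ at least two agree and the disagreement is ``absorbed'' because $\{b,c,d\}$ and the products $bc$, etc., generate the same set of moves on the orbit. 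Once this formula is in hand both the orbit description and the $\omega$-independence of the unlabelled graph follow by matching edges.
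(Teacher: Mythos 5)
Your overall strategy (orbit $\subseteq$ cofinal sequences because each generator changes at most one letter; reverse inclusion by an induction; then compare edge sets) is reasonable, and note that the paper itself gives no argument here but simply cites the analogous statements for the orbit of $1^\infty$ in Matte Bon's paper, so a self-contained proof would indeed have to carry out what you sketch. However, two of your key steps have genuine problems. For the reverse inclusion, the reduction ``find a generator (or short word) decreasing the position of the last $1$'' is not available as stated: a generator can only flip the first letter or the letter immediately following the first $0$, so the last $1$ of $\xi = x_1\ldots x_n0^\infty$ (with $x_n=1$) can be erased only when $\xi$ already has the very special form $1^{n-2}010^\infty$. Hence before decreasing $n$ you must first turn the prefix $x_1\ldots x_{n-1}$ into $1^{n-2}0$ using the same two types of moves, which is itself a transitivity statement on length-$(n-1)$ prefixes requiring its own induction; you have not set this up. The standard way to close this gap is either that nested induction, or to prove first that the projection of $\stabOm(1)$ to each subtree is all of $G_{\sigma\omega}$ (using that for each depth at least two of $\bOm,\cOm,\dOm$ have section $a$ there) and then induct on the length of the nonzero prefix.

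For the $\omega$-independence of the unlabelled graph, your justification is incorrect as written: there is in general no pair among $\bOm,\cOm,\dOm$ ``whose $I^i_\omega$ equals $a$ at every level,'' and hence no two of them induce the same permutation of the orbit. Indeed, at a vertex $x=1^k0z$ the generator acting trivially is the one selected by $\omega_{k+1}$, so it varies with $k$; for any $\omega$ in which all three symbols occur (in particular $(012)^\infty$) each of $\bOm,\cOm,\dOm$ has trivial action at infinitely many vertices. The statement that is true, and that suffices, is local: at each vertex exactly one of $\bOm,\cOm,\dOm$ fixes it (giving a loop) and the other two both send it to the \emph{same} vertex, obtained by flipping the letter immediately after the first $0$ of $x$ --- note this is also not the ``carry'' map $1^k0x\leftrightarrow 1^{k+1}x$ you describe, which changes the wrong coordinate. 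With this local description every vertex carries one $a$-edge, one loop and one double edge whose endpoints do not depend on $\omega$, which is exactly the claimed equality of unlabelled multigraphs; I would rewrite the second half of your argument around that vertex-wise statement rather than a global coincidence of generators.
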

\begin{proof}
These facts are proved in \cite{MatteBon2015} for the orbital graph of $1^\infty$, but the proofs can be adapted directly.
\end{proof}
\begin{defn}
We define the graph $\Gamma$ as the underlying unlabelled graph of the graphs $\Gamma_\omega$ and $V(\Gamma)$ as its set of vertices.
\end{defn}

See Figure~\ref{F:Graphe_gamma} for some examples of graphs $\Gamma_\omega$ and $\Gamma$.
 
\begin{figure}
\centering
\begin{tikzpicture}[scale=0.6]
\begin{scope}[yshift=6cm]
\node[nom] (Name) at (-10,0) {$\Gamma_{(012)^\infty}$};
\node[sommet] (1101) at (-6,0) {};
\node[nom]  at (-6,-0.75) {$11010^\infty$};
\node[sommet] (11) at (-4,0) {};
\node[nom]  at (-4,-0.75) {$110^\infty$};
\node[sommet] (01) at (-2,0) {};
\node[nom]  at (-2,-0.75) {$010^\infty$};
\node[sommet] (0) at (0,0) {};
\node[nom]  at (0,-0.75) {$0^\infty$};
\node[sommet] (10) at (2,0) {};
\node[nom]  at (2,-0.75) {$10^\infty$};
\node[sommet] (101) at (4,0) {};
\node[nom]  at (4,-0.75) {$1010^\infty$};
\node[style=empty] (L) at (-7,0) {};
\node[style=empty](R) at (5,0) {};
\draw[\Colora] (101) to  (R) ;
\draw[bend right,\Colord] (10) to   (101);
\draw[bend left,\Colorb] (10) to  (101);
\draw[loop_above,\Colorc] (10) to   (10);
\draw[loop_above,\Colorc] (101) to (101);
\draw[\Colora] (0) to  (10);
\draw[loop_above,\Colord] (0) to   (0);
\draw[bend right,\Colorb] (0) to    (01) ;
\draw[bend left,\Colorc] (0) to   (01) ;
\draw[loop_above,\Colord] (01) to  (01) ;
\draw[\Colora] (01) to  (11);
\draw[loop_above,\Colorb] (11) to  (11) ;
\draw[bend right,\Colord] (11) to   (1101) ;
\draw[bend left,\Colorc] (11) to  (1101) ;
\draw[loop_above,\Colorb] (1101) to  (1101) ;
\draw[\Colora] (1101) to  (L);
\end{scope}
\begin{scope}[yshift=3cm]
\node[nom] (Name) at (-10,0) {$\Gamma_{(01)^\infty}$};
\node[sommet] (1101) at (-6,0) {};
\node[nom]  at (-6,-0.75) {$11010^\infty$};
\node[sommet] (11) at (-4,0) {};
\node[nom]  at (-4,-0.75) {$110^\infty$};
\node[sommet] (01) at (-2,0) {};
\node[nom]  at (-2,-0.75) {$010^\infty$};
\node[sommet] (0) at (0,0) {};
\node[nom]  at (0,-0.75) {$0^\infty$};
\node[sommet] (10) at (2,0) {};
\node[nom]  at (2,-0.75) {$10^\infty$};
\node[sommet] (101) at (4,0) {};
\node[nom]  at (4,-0.75) {$1010^\infty$};
\node[style=empty] (L) at (-7,0) {};
\node[style=empty](R) at (5,0) {};
\draw[\Colora] (101) to  (R) ;
\draw[bend right,\Colord] (10) to   (101);
\draw[bend left,\Colorb] (10) to  (101);
\draw[loop_above,\Colorc] (10) to   (10);
\draw[loop_above,\Colorc] (101) to (101);
\draw[\Colora] (0) to  (10);
\draw[loop_above,\Colord] (0) to   (0);
\draw[bend right,\Colorb] (0) to    (01) ;
\draw[bend left,\Colorc] (0) to   (01) ;
\draw[loop_above,\Colord] (01) to  (01) ;
\draw[\Colora] (01) to  (11);
\draw[loop_above,\Colord] (11) to  (11) ;
\draw[bend right,\Colorb] (11) to   (1101) ;
\draw[bend left,\Colorc] (11) to  (1101) ;
\draw[loop_above,\Colord] (1101) to  (1101) ;
\draw[\Colora] (1101) to  (L);
\end{scope}
\begin{scope}
\node[nom] (Name) at (-10,0) {$\Gamma$};
\node[sommet] (1101) at (-6,0) {};
\node[nom]  at (-6,-0.75) {$11010^\infty$};
\node[sommet] (11) at (-4,0) {};
\node[nom]  at (-4,-0.75) {$110^\infty$};
\node[sommet] (01) at (-2,0) {};
\node[nom]  at (-2,-0.75) {$010^\infty$};
\node[sommet] (0) at (0,0) {};
\node[nom]  at (0,-0.75) {$0^\infty$};
\node[sommet] (10) at (2,0) {};
\node[nom]  at (2,-0.75) {$10^\infty$};
\node[sommet] (101) at (4,0) {};
\node[nom]  at (4,-0.75) {$1010^\infty$};
\node[style=empty] (L) at (-7,0) {};
\node[style=empty] (R) at (5,0) {};
\draw (101) to (R) ;
\draw (10) to [bend right]  (101);
\draw (10) to [bend left] (101);
\draw (10) to [loop_above]  (10);
\draw (101) to [loop_above]  (101);
\draw (0) to  (10);
\draw (0) to [loop_above]  (0);
\draw (0) to [bend right]   (01) ;
\draw (0) to [bend left]  (01) ;
\draw (01) to [loop_above] (01) ;
\draw (01) to  (11);
\draw[loop_above] (11) to  (11) ;
\draw[bend right] (11) to   (1101) ;
\draw[bend left] (11) to  (1101) ;
\draw[loop_above] (1101) to  (1101) ;
\draw (1101) to  (L);
\end{scope}
\end{tikzpicture}
\caption{Some examples of graphs $\Gamma_\omega$ and the graph $\Gamma$. The red edges are labelled by $a$, the blue by $\bOm$, the green by $\cOm$ and the orange by $\dOm$.}
\label{F:Graphe_gamma}
\end{figure}

We will now define the commensurated subset. Let $\Gamma_+$ be the set of vertices of $\Gamma$ including $0^\infty$ and all the vertices on the right side of it, see Figure~\ref{F:gamma_+}. 

\begin{figure}
\centering
\begin{tikzpicture}
\node[sommet_blanc] (1101) at (-6,0) {};
\node[nom]  at (-6,-0.75) {$11010^\infty$};
\node[sommet_blanc] (11) at (-4,0) {};
\node[nom]  at (-4,-0.75) {$110^\infty$};
\node[sommet_blanc] (01) at (-2,0) {};
\node[nom]  at (-2,-0.75) {$010^\infty$};
\node[sommet] (0) at (0,0) {};
\node[nom]  at (0,-0.75) {$0^\infty$};
\node[sommet] (10) at (2,0) {};
\node[nom]  at (2,-0.75) {$10^\infty$};
\node[sommet] (101) at (4,0) {};
\node[nom]  at (4,-0.75) {$1010^\infty$};
\node[style=empty] (L) at (-7,0) {};
\node[style=empty] (R) at (5,0) {};
\draw (101) to (R) ;
\draw (10) to [bend right]  (101);
\draw (10) to [bend left] (101);
\draw (10) to [loop_above]  (10);
\draw (101) to [loop_above]  (101);
\draw (0) to  (10);
\draw (0) to [loop_above]  (0);
\draw (0) to [bend right]   (01) ;
\draw (0) to [bend left]  (01) ;
\draw (01) to [loop_above] (01) ;
\draw (01) to  (11);
\draw[loop_above] (11) to  (11) ;
\draw[bend right] (11) to   (1101) ;
\draw[bend left] (11) to  (1101) ;
\draw[loop_above] (1101) to  (1101) ;
\draw (1101) to  (L);
\end{tikzpicture}
\caption{The black vertices are elements of $\Gamma_+$} \label{F:gamma_+}
\end{figure}
The vertices of $\Gamma_+$ can be described in an explicit way as follows,
\begin{prop}\label{P:Gamma_description}
The set $\Gamma_+$ consists of $0^\infty$ and all the vertices of the form $x = x_1 x_2 \ldots x_n 0^\infty$ with $x_n = 1$ and $n$ odd.
\end{prop}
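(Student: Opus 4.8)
The plan is to understand $\Gamma_+$ concretely by tracking how the generators $a, \bOm, \cOm, \dOm$ move vertices of $\Gamma$, viewed as one-sided binary sequences with finitely many $1$'s. The key structural fact is that $\Gamma$ is a line (a two-ended graph), with $0^\infty$ sitting on it, and $\Gamma_+$ is by definition the half-line consisting of $0^\infty$ together with everything to its right. So it suffices to determine which sequences lie weakly to the right of $0^\infty$ along this line. I would first recall (or re-derive from \cite{MatteBon2015}) the explicit linear order on $V(\Gamma)$: the vertices are linearly ordered so that consecutive vertices differ by an application of $a$ (the red edges form a perfect matching giving the ``long'' steps of the line) interleaved with the $\bOm,\cOm,\dOm$ edges (which are loops or short back-and-forth edges). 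The position of a vertex $x = x_1\ldots x_n 0^\infty$ on this line can be read off from its binary expansion.

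The main computation is to identify, for a vertex $x = x_1 \ldots x_n 0^\infty$ with $x_n = 1$, on which side of $0^\infty$ it lies, and to show this depends only on the parity of $n$. First I would note that $0^\infty$ is itself of the degenerate form (empty prefix), and then argue by an explicit description of the bijection between $V(\Gamma)$ and (a subset of) the integers. Concretely, following the standard picture of these Schreier/orbital graphs, one shows that applying $a$ to $0^\infty$ gives $10^\infty$ (one step right), applying the appropriate element of $\{\bOm,\cOm,\dOm\}$ then moves $10^\infty$ to $110^\infty$ or back, and so on; an induction on $n$ shows that the vertices reachable by going right from $0^\infty$ are exactly those $x_1\ldots x_n 0^\infty$ with $x_n = 1$ and $n$ odd, while those to the left have $x_n = 1$ and $n$ even. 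The cleanest way to run this is to exhibit an explicit order-preserving (or order-reversing) map $V(\Gamma) \to \Z$ and check that the claimed set maps to $\Z_{\geq 0}$; the parity condition should come out of the fact that the two colors of vertices in the bipartition-like structure of $\Gamma$ alternate along the line.

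The step I expect to be the main obstacle is pinning down the precise normal form / address of a vertex on the line $\Gamma$ and proving that the ``right half-line'' corresponds exactly to the odd-length-with-final-$1$ sequences, rather than some other parity convention — a sign/parity bookkeeping issue that is easy to get backwards. I would handle it by computing the first several vertices explicitly (as in Figure~\ref{F:Graphe_gamma}: $0^\infty$, $10^\infty$, $1010^\infty$, $\ldots$ going right, and $010^\infty$, $11010^\infty$, $\ldots$ going left) to fix the convention, then promote this to a general statement by induction on $n$, using the recursive definition of the generators (the restrictions $b_{\sigma\omega}$ etc.) to see how acting on a vertex with prefix $1x$ versus $0x$ behaves. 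Once the correspondence with $\Z$ is established, the Proposition is immediate: $\Gamma_+$ is the preimage of $\Z_{\geq 0}$, which is precisely $\{0^\infty\} \cup \{x_1\ldots x_n 0^\infty : x_n = 1,\ n \text{ odd}\}$.
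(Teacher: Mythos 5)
Your overall picture is correct ($\Gamma$ is a two-ended line, $\Gamma_+$ is the half-line weakly to the right of $0^\infty$, so one must decide which binary sequences lie on that side), but the proposal defers exactly the step that constitutes the proof: the explicit order-preserving map $V(\Gamma)\to\Z$ and the ``induction on $n$'' that is supposed to show that the vertices reachable going right are precisely those with last $1$ in odd position are never actually specified, only announced. Worse, the one concrete mechanism you offer for the parity --- that it ``should come out of the fact that the two colors of vertices in the bipartition-like structure of $\Gamma$ alternate along the line'' --- is not what happens: the parity of the position of the last $1$ does \emph{not} alternate along the line; it is constant on each of the two half-lines and changes only across the edges at $0^\infty$ (to the right one sees $10^\infty, 1010^\infty,\ldots$, all with last $1$ in odd position, to the left $010^\infty, 110^\infty, 11010^\infty,\ldots$, all with last $1$ in even position). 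So the ``sign/parity bookkeeping'' you flag as the main obstacle is precisely what remains unproved in your write-up, and the heuristic you propose to settle it points in the wrong direction.

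The missing idea --- and essentially the whole of the paper's proof --- is a conserved quantity for the generator moves, which makes any identification of $V(\Gamma)$ with $\Z$ and any induction unnecessary. Every generator acts on a sequence $x\in\{0,1\}^\infty$ either trivially, or by flipping the first digit (if $s=a$), or by flipping the digit immediately after the first occurrence of $0$ (if $s\in\{\bOm,\cOm,\dOm\}$). Both moves preserve the parity of the position of the last $1$ of $x$, the only exceptions being the moves involving $0^\infty$ (where there is no last $1$), i.e.\ the edges $0^\infty\leftrightarrow 10^\infty$ and $0^\infty\leftrightarrow 010^\infty$. Since any two vertices strictly on the right of $0^\infty$ are joined in $\Gamma$ by a path avoiding $0^\infty$, they all share the parity of $10^\infty$ (odd), and symmetrically all vertices on the left share the parity of $010^\infty$ (even); as every vertex of $\Gamma$ other than $0^\infty$ lies on exactly one side and has some last $1$, the two parities exactly separate the two halves, which is the stated description of $\Gamma_+$. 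If you add this invariance observation (or, alternatively, genuinely carry out the explicit address computation you sketch, which is substantially more work), your argument becomes complete.
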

\begin{proof}
Let us begin by an observation. Given any element $x$ of $\{0,1\}^\infty$, a generator $s$ of $\Gom$ can only acts non-trivially in two ways: 
\begin{itemize}
\item flip the first digit of $x$ (if $s = a$) 
\item flip the digit after the first apparition of $0$ (if $s \in \{\bOm,\cOm,\dOm\}$) 
\end{itemize}
It is straightforward to see that these two moves do not change the parity of the position of the last $1$, except for $0^\infty$ and $10^\infty$. We constructed $\Gamma$ in a way to have $10^\infty$ on the right of $0^\infty$, and then, the parity of the position of the last $1$ is the same as for $10^\infty$ for all the other vertices on the right of $0^\infty$.
\end{proof}

\begin{prop}\label{P:Stab_connected_comp}
The set $\Gamma_+$ is commensurated by the action of $\Gom$.
\end{prop}
\begin{proof}
We need to prove that $g\Gamma_+ \triangle \Gamma_+$ is finite for every $g$ in $\Gom$. 

Let $Y=\Gamma_+ \setminus g\Gamma_+$. If $n$ is the length of $g$ with respect to the standard generating set, a vertex $x$ in $Y$ must be at distance at most $n$ of $\Gamma_+^c$ in the graph $\Gamma$. As this graph is locally finite, $Y$ is necessarily finite. Similarly $g\Gamma_+ \setminus \Gamma_+$ is finite and so $g\Gamma_+ \triangle \Gamma_+$ is also finite.
\end{proof}
The following theorem is now a direct consequence of the first part of the previous proposition and Theorem~\ref{T:main_A}.
\begin{mainthm}\label{T:bounded_orbit}
There exist an infinite CAT(0) cube complex $\X$ on which every $\Gom$ acts without bounded orbit, for every $\omega$ in $\{0,1,2\}^\infty$.
\end{mainthm}

In the following, we will keep the notation $\X$ for this cube complex which is the connected component of $\Gamma_+$ in the infinite cube constructed from the set $\Gamma$ (see section~\ref{S:General} for details).
\begin{rem}\label{R:dimension}
We can ask if it is possible to construct an action of $\Gom$ without bounded orbit on a CAT(0) cube complex of finite dimension, or at least of locally finite dimension. A negative answer can be given for the groups $\Gom$ whose sequences $\omega$ contain an infinity of $0,1$ and $2$. Indeed, such groups are 2-groups, see \cite{Grigorchuk1984}, and the following theorem shows that they cannot act on smaller CAT(0) cube complexes without bounded orbit.%
\end{rem}
\begin{thm}[{\cite{Genevois2024}}]
Let $G$ be a group acting on a CAT(0) cube complex $X$. Assume that there is a finite number of orbits of hyperplanes and that $X$ does not contain an Hilbert cube\footnote{A Hilbert cube is the product of countably infinitely many copies of intervals $[0,1]^n$}. If the action is purely periodic, i.e. every element $g$ defines a periodic isometry of $X$, then $G$ stabilizes a finite dimensional cube.
\end{thm}

\subsection{Properness and Faithfulness}

In this section, we will consider the uncountable subfamily of Grigorchuk groups $\Gom$ which are indexed by sequences $\omega$ in $\{0,1,2\}^\infty$ without repetition, i.e. $\omega_i \neq \omega_{i+1}$ for every $i$, and prove that their action on $\X$ is proper and faithful by using the second part of Theorem~\ref{T:main_A}.

The idea of the proof is to show that stabilizing a subset of vertices of $\Gamma$ is a strong condition and only few elements of $\Gom$ succeed. We will decompose this proof into different steps. Firstly, we will compute the stabilizers of two particular subsets of $V(\Gamma)$. Secondly, we will show that the stabilizer of a subset of vertices of $\Gamma$ which has a finite symmetric difference with the two particular subsets, is also finite. Finally, we will explain how these stabilizers of subsets can be related with the stabilizers of the cubes of $\X$.

Let us begin by defining precisely the notion of stabilizer of a subset of $V(\Gamma)$ and the two particular subsets that we will study.
\begin{defn}
Let $\Omega \subset V(\Gamma)$ be a subset of vertices of $\Gamma$ and $\Gom$ a Grigorchuk group. The stabilizer of $\Omega$ for the action of $\Gom \actsGroup \X$ is
\begin{equation*}
\stabOm(\Omega) = \{g \in \Gom : g\Omega = \Omega \}.
\end{equation*} 
\end{defn}
\begin{defn}
We will study two particular subsets of $V(\Gamma)$ : 
\begin{itemize}
\item $\Gamma_+$, which we have already described above
\item $\GammaMinusZero = \Gamma_+ \setminus \{0^\infty \}$.
\end{itemize}
\end{defn}
The following lemma explains the behaviour of the vertices of $\Gamma$ between these two subsets when we add a digit at the beginning of their binary writing. 
\begin{lem} \label{L:prefix_gamma}
Let $x$ be a vertex of $\Gamma$. Then
\begin{itemize}
\item $x \in \Gamma_+ \Leftrightarrow 0 x \in \GammaMinusZero^c$.
\item $x \in \Gamma_+^c \Leftrightarrow 0x \in \GammaMinusZero$.
\item $x \in \Gamma_+^c \Rightarrow 1x \in \GammaMinusZero$.
\item $x \in \GammaMinusZero \Leftrightarrow 0x \in \Gamma_+^c$.
\item $x \in \GammaMinusZero \Leftrightarrow 1x \in \GammaMinusZero^c$.
\item $x \in \GammaMinusZero^c \Leftrightarrow 0x \in \Gamma_+$.
\item $x \in \GammaMinusZero^c \Leftrightarrow 1x \in \GammaMinusZero$.
\end{itemize}
\end{lem}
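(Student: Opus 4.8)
The plan is to work directly from the explicit description of $\Gamma_+$ given in Proposition~\ref{P:Gamma_description}: the set $\Gamma_+$ consists of $0^\infty$ together with all sequences $x_1\ldots x_n 0^\infty$ with $x_n = 1$ and $n$ odd, and hence $\GammaMinusZero = \Gamma_+ \setminus \{0^\infty\}$ consists of exactly those sequences with last $1$ in an odd position. The complement $\GammaMinusZero^c$ inside $V(\Gamma)$ (which, by the first Proposition of the excerpt, is the set of eventually-zero sequences) is then $\{0^\infty\}$ together with all sequences whose last $1$ is in an \emph{even} position. The key bookkeeping observation is how prepending a digit shifts the position of the last $1$: if $x$ has last $1$ in position $n$, then $0x$ has last $1$ in position $n+1$, and $1x$ has last $1$ in position $n+1$ as well \emph{unless} $x = 0^\infty$, in which case $0x = 0^\infty$ and $1x = 10^\infty$ has its last (and only) $1$ in position $1$. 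So prepending either digit flips the parity of the position of the last $1$, with the lone exception that prepending $0$ to $0^\infty$ gives back $0^\infty$.

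With this in hand, each of the seven bi-implications is a short parity check; I would simply verify them one at a time. For instance: $x \in \Gamma_+$ means $x = 0^\infty$ or $x$ has last $1$ in odd position; in the first case $0x = 0^\infty \in \GammaMinusZero^c$, and in the second case $0x$ has last $1$ in even position, so $0x \notin \GammaMinusZero$, i.e. $0x \in \GammaMinusZero^c$; conversely if $0x \in \GammaMinusZero^c$ then $0x = 0^\infty$ (forcing $x = 0^\infty \in \Gamma_+$) or $0x$ has last $1$ in even position (forcing $x$ to have last $1$ in odd position, so $x \in \Gamma_+$). The second item is the complementary statement: $x \in \Gamma_+^c$ iff $x$ has last $1$ in even position iff $0x$ has last $1$ in odd position iff $0x \in \GammaMinusZero$. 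The fourth, fifth, sixth and seventh items are the same parity dichotomy phrased for $\GammaMinusZero$ and $\GammaMinusZero^c$, using that for $x \neq 0^\infty$ prepending $1$ behaves exactly like prepending $0$ as far as the parity of the last $1$ is concerned, and noting that $0^\infty \in \GammaMinusZero^c$ while $1 \cdot 0^\infty = 10^\infty$ has last $1$ in position $1$ (odd), hence $10^\infty \in \GammaMinusZero$, which is consistent with item seven.

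The only items that are one-directional are the third, $x \in \Gamma_+^c \Rightarrow 1x \in \GammaMinusZero$, and I would single out why the converse fails: $1x \in \GammaMinusZero$ means $1x$ has last $1$ in odd position; if $x \neq 0^\infty$ this forces $x$ to have last $1$ in even position, i.e. $x \in \Gamma_+^c$, but if $x = 0^\infty$ then $1x = 10^\infty \in \GammaMinusZero$ while $x = 0^\infty \in \Gamma_+$, so the implication cannot be reversed — this is exactly the boundary vertex that the one-directional formulation is designed to accommodate.

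The main (and really only) obstacle is keeping careful track of the exceptional vertex $0^\infty$ and of the fact that $V(\Gamma)$ is the set of eventually-zero sequences rather than all one-sided sequences, so that "last $1$" is always well defined on the relevant vertices; once the shift-of-parity observation is stated cleanly, every line is a two-case inspection and there is no substantive difficulty. I would present the proof as: (1) recall the parity description of $\Gamma_+$, $\GammaMinusZero$, $\GammaMinusZero^c$; (2) state the prepend-a-digit parity lemma with its single exception; (3) dispatch the seven items, grouping the ones with identical proofs and flagging the asymmetry in the third.
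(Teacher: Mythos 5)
Your proposal is correct and follows essentially the same route as the paper: the paper's proof simply records the parity descriptions of $\Gamma_+$, $\Gamma_+^c$, $\GammaMinusZero$ and $\GammaMinusZero^c$ coming from Proposition~\ref{P:Gamma_description} and then checks each item by direct computation, which is exactly your plan. Your explicit prepend-a-digit parity observation and the careful handling of the exceptional vertex $0^\infty$ (including why the third item is only one-directional) just spell out the "direct computations" the paper leaves to the reader.
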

\begin{proof}
We proved in Proposition~\ref{P:Gamma_description} that $\Gamma_+$ can be fully described by looking at the parity of the position of the last digit 1. This allows us to give the following descriptions: 
\begin{align*}
\Gamma_+ &= \{0^\infty\} \cup \{x_1x_2 \ldots x_{2n+1}0^\infty\} \\
\Gamma_+^c &= \{x_1x_2 \ldots x_{2n}0^\infty\} \\
\GammaMinusZero &=  \{x_1x_2 \ldots x_{2n+1}0^\infty \} \\ 
\GammaMinusZero^c &=  \{0^\infty\} \cup\{x_1x_2 \ldots x_{2n}0^\infty \}
\end{align*}
where every last $x_i$ is equal to $1$. Every point of the lemma can be proved by direct computations using these descriptions.

\end{proof}
We will study the restrictions of elements of $\stabOm(\Gamma_+)$ and $\stabOm(\GammaMinusZero)$ on the first level of the tree.
\begin{lem}\label{L:stab_projections}
Let $\omega$ be a sequence in $\{0,1,2\}^\infty$ and $g$ an element of $\Gom$.
\begin{enumerate}
\item If $g$ stabilizes the first level of the tree and $g \in \stabOm(\Gamma_+)$, then $g_0,g_1 \in \stab_{\sigma\omega}(\GammaMinusZero)$.
\item If $g$ stabilizes the first level of the tree and $g \in \stabOm(\GammaMinusZero)$, then $g_0 \in \stab_{\sigma\omega}(\Gamma_+)$ and $g_1 \in \stab_{\sigma\omega}(\Gamma_+) \cap \stab_{\sigma\omega}(\GammaMinusZero)$.
\item If $g$ does not stabilize the first level of the tree and $g \in \stabOm(\GammaMinusZero)$, then $g_0,g_1 \in \stab_{\sigma\omega}(\Gamma_+) \cap \stab_{\sigma\omega}(\GammaMinusZero)$.
\end{enumerate}

\end{lem}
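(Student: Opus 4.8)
The plan is to exploit the wreath-recursion structure of $\Gom$ together with Lemma~\ref{L:prefix_gamma}, which tells us exactly how membership in $\Gamma_+$ and $\GammaMinusZero$ transforms under prepending a digit. The key point is that if $g$ stabilizes the first level, then for $x$ of the form $ix'$ (with $i \in \{0,1\}$) we have $g(ix') = i\,g_i(x')$, so the condition $g\Omega = \Omega$ decouples into two conditions on the two subtrees, which we then translate via Lemma~\ref{L:prefix_gamma} into membership of $g_0$ and $g_1$ in the appropriate stabilizers in $\G_{\sigma\omega}$.

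For part (1): assume $g \in \stabOm(\Gamma_+)$ stabilizes the first level. Take $x' \in \GammaMinusZero$; we must show $g_0(x') \in \GammaMinusZero$ and $g_1(x') \in \GammaMinusZero$. By Lemma~\ref{L:prefix_gamma}, $x' \in \GammaMinusZero \Leftrightarrow 0x' \in \Gamma_+^c$ and $x' \in \GammaMinusZero \Leftrightarrow 1x' \in \GammaMinusZero^c$. Hmm — I need $\GammaMinusZero$, not its complement, on the "output" side, so let me run it the other way: I want to show $g_0$ maps $\GammaMinusZero$ into itself, equivalently (since $g_0$ is a bijection and $\Gamma_+$ is $g$-invariant hence the image of $\GammaMinusZero^c$ is controlled too) it suffices to check both inclusions. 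Take $x' \in \GammaMinusZero$: then $0x' \in \Gamma_+^c$ (third-to-last bullet), and since $g$ stabilizes $\Gamma_+$ it also stabilizes $\Gamma_+^c$, so $g(0x') = 0\,g_0(x') \in \Gamma_+^c$, whence by the same bullet $g_0(x') \in \GammaMinusZero$. Conversely if $x' \in \GammaMinusZero^c$ then $0x' \in \Gamma_+$, so $0\,g_0(x') \in \Gamma_+$, so $g_0(x') \in \GammaMinusZero^c$; combining, $g_0 \in \stab_{\sigma\omega}(\GammaMinusZero)$. For $g_1$: $x' \in \GammaMinusZero \Rightarrow 1x' \in \GammaMinusZero^c$ (fifth bullet), and we need to know how $\Gamma_+$ relates to digit-$1$ prefixes; here one uses the description $\Gamma_+ = \{0^\infty\}\cup\{x_1\cdots x_{2n+1}0^\infty\}$ directly — prepending $1$ to a word with last $1$ in even position gives last $1$ in odd position (or $1x' = 10^\infty \in \Gamma_+$ when $x' = 0^\infty$), so $1x' \in \Gamma_+ \Leftrightarrow x' \in \Gamma_+^c$, i.e. $1x' \in \Gamma_+ \Leftrightarrow 1x' \notin \GammaMinusZero$ is consistent. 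Running the invariance of $\Gamma_+$ through these equivalences yields $g_1 \in \stab_{\sigma\omega}(\GammaMinusZero)$.

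Parts (2) and (3) are the same mechanism with a different bookkeeping. For (2), $g \in \stabOm(\GammaMinusZero)$ stabilizing the first level: on the $0$-subtree, $x' \in \Gamma_+ \Leftrightarrow 0x' \in \GammaMinusZero^c$ and $x' \in \Gamma_+^c \Leftrightarrow 0x' \in \GammaMinusZero$ (first two bullets), so invariance of $\GammaMinusZero$ gives $g_0 \in \stab_{\sigma\omega}(\Gamma_+)$; on the $1$-subtree, both $x'\in\GammaMinusZero \Leftrightarrow 1x'\in\GammaMinusZero^c$ and $x'\in\Gamma_+^c \Leftrightarrow 1x'\in\GammaMinusZero$ hold (third and fifth-to-last bullets), so invariance of $\GammaMinusZero$ forces $g_1$ to preserve both $\GammaMinusZero$ and $\Gamma_+$, giving $g_1 \in \stab_{\sigma\omega}(\Gamma_+)\cap\stab_{\sigma\omega}(\GammaMinusZero)$. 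For (3), write $g = a(g_0,g_1)$, so $g(0x') = 1\,g_0(x')$ and $g(1x') = 0\,g_1(x')$; now $g\GammaMinusZero = \GammaMinusZero$ combined with the bullets relating $0$-prefixes and $1$-prefixes to $\GammaMinusZero$ (and to $\Gamma_+$, via $\GammaMinusZero^c = \{0^\infty\}\cup\{x_1\cdots x_{2n}0^\infty\}$) pins down that both $g_0$ and $g_1$ preserve $\Gamma_+$ and $\GammaMinusZero$ simultaneously.

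I expect the main obstacle to be purely organizational: keeping straight, in each of the three cases and for each of the two restrictions $g_i$, which of the seven equivalences of Lemma~\ref{L:prefix_gamma} to invoke and in which direction, and in particular handling the exceptional vertices $0^\infty$ and $10^\infty$ correctly (the only places where prepending a digit changes the parity of the last $1$). One must also be slightly careful that, a priori, the "restriction" $g_0$ or $g_1$ is an element of $\G_{\sigma\omega}$ acting on a subtree isometric to $\Tbin$, and that the identification of that subtree's boundary orbit with $\Gamma$ is the tautological one, so that the stabilizers $\stab_{\sigma\omega}(\Gamma_+)$ etc. make sense and the translation via the digit-prepending map is exactly the content of Lemma~\ref{L:prefix_gamma}. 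Once this dictionary is set up, each of the three statements is a short finite check.
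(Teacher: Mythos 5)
Your proposal is correct and takes essentially the same route as the paper: decouple the condition through the wreath recursion $g=(g_0,g_1)$ (or $g=a(g_0,g_1)$ in case (3)) and translate the invariance of $\Gamma_+$ or $\GammaMinusZero$ into statements about $g_0,g_1$ via the digit-prepending dictionary of Lemma~\ref{L:prefix_gamma}, case by case. The paper's proof just writes out explicitly the implication chains you describe for parts (2) and (3), with the same attention to the exceptional vertices $0^\infty$ and $10^\infty$ that you flag.
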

\begin{proof}
Let's start by noting that an element $g$ is in $\stabOm(\Gamma_+)$ (resp. $\stabOm(\GammaMinusZero)$) if and only if it is in $\stabOm(\Gamma_+^c)$ (resp. $\stabOm(\GammaMinusZero^c)$). The proof is an application of Lemma~\ref{L:prefix_gamma}. 
\begin{enumerate}
\item 
Let $g$ be an element of $\stabOm(\Gamma_+)$ which stabilizes the first level of the tree. Then,
\begin{align*}
x \in \GammaMinusZero 	& \Rightarrow  0 x \in \Gamma_+^c \\
						& \Rightarrow  g(0 x) \in \Gamma_+^c \\
						& \Rightarrow  0g_0(x) \in \Gamma_+^c \\
						& \Rightarrow  g_0(x) \in \GammaMinusZero \\
						& \Rightarrow  g_0 \in \stab_{\sigma\omega}(\GammaMinusZero),
\end{align*}
\begin{align*}
x \in \GammaMinusZero^c 	& \Rightarrow  1 x \in \Gamma_+ \\
							& \Rightarrow  g(1 x) \in \Gamma_+ \\
							& \Rightarrow  1g_1(x) \in \Gamma_+ \\
							& \Rightarrow  g_1(x) \in \GammaMinusZero^c  \\
							& \Rightarrow  g_1 \in \stab_{\sigma\omega}(\GammaMinusZero),
\end{align*}
\item 
Let $g$ be an element of $\stabOm(\GammaMinusZero)$ which stabilizes the first level of the tree. Then,
\begin{align*}
x \in \Gamma_+^c 	&\Rightarrow  0 x \in \GammaMinusZero \\
					&\Rightarrow  g(0 x) \in \GammaMinusZero \\
					&\Rightarrow  0g_0(x) \in \GammaMinusZero \\
					&\Rightarrow  g_0(x) \in \Gamma_+^c \\
					&\Rightarrow  g_0 \in \stab_{\sigma\omega}(\Gamma_+),
\end{align*}
\begin{align*}
x \in \GammaMinusZero & \Rightarrow  1 x \in \GammaMinusZero^c \\
						&\Rightarrow  g(1 x) \in \GammaMinusZero^c  \\
						& \Rightarrow  1g_1(x) \in \GammaMinusZero^c \\ 
						& \Rightarrow  g_1(x) \in \GammaMinusZero \\
						& \Rightarrow  g_1 \in \stab_{\sigma\omega}(\GammaMinusZero), 
\end{align*}
\begin{align*}
x \in \Gamma_+^c	&\Rightarrow 1x \in \GammaMinusZero \\
					&\Rightarrow g(1x) \in \GammaMinusZero \\
					&\Rightarrow 1g_1(x) \in \GammaMinusZero \\
					&\Rightarrow g_1(x) \in \Gamma_+^c \\
					&\Rightarrow g_1 \in \stab_{\sigma\omega}(\Gamma_+).
\end{align*}
\item 
Let $g$ be an element of $\stabOm(\GammaMinusZero)$ which does not stabilize the first level of the tree. Then,
\begin{align*}
x \in \Gamma_+			&\Rightarrow	0x \in \GammaMinusZero^c \\
							&\Rightarrow	g(0x) \in \GammaMinusZero^c \\
							&\Rightarrow	1g_0(x) \in \GammaMinusZero^c \\
							&\Rightarrow 	g_0(x) \in \GammaMinusZero \subset \Gamma_+ \\
							&\Rightarrow  	g_0 \in \stab_{\sigma\omega}(\Gamma_+),
\end{align*}
\begin{align*}
x \in \GammaMinusZero\subset \Gamma_+		&\Rightarrow	0x \in \GammaMinusZero^c \\
							&\Rightarrow	g(0x) \in \GammaMinusZero^c \\
							&\Rightarrow	1g_0(x) \in \GammaMinusZero^c \\
							&\Rightarrow	g_0(x) \in \GammaMinusZero \\
							&\Rightarrow  	g_0 \in \stab_{\sigma\omega}(\GammaMinusZero),
\end{align*}
\begin{align*}
x \in \Gamma_+^c			&\Rightarrow	1x \in \GammaMinusZero \\
							&\Rightarrow	g(1x) \in \GammaMinusZero \\
							&\Rightarrow	0g_1(x) \in \GammaMinusZero \\
							&\Rightarrow 	g_1(x) \in \Gamma_+^c \\
							&\Rightarrow  	g_1 \in \stab_{\sigma\omega}(\Gamma_+),
\end{align*}
\begin{align*}
x \in \GammaMinusZero^c 	&\Rightarrow	1x \in \GammaMinusZero \\
							&\Rightarrow	g(1x) \in \GammaMinusZero \\
							&\Rightarrow	0g_1(x) \in \GammaMinusZero \\
							&\Rightarrow	g_1(x) \in \GammaMinusZero^c \\
							&\Rightarrow  	g_1 \in \stab_{\sigma\omega}(\GammaMinusZero). \\
\end{align*}
\end{enumerate}
\end{proof}

We recall the following classical lemma which gives an estimate of the size of restrictions of an element which fixes the first level of the tree.
\begin{lem} \label{L:reduction_lemma}
Let $\omega$ be a sequence in $\{0,1,2\}^\infty$ and $g$ an element of $\Gom$ which stabilizes the first level of the tree. Then,
\begin{equation*}
l_{\sigma\omega}(g_i) \leq \frac{l_{\omega}(g) +1}{2} \quad i =0,1
\end{equation*}
where $l_{\omega}(g)$ (resp. $l_{\sigma\omega}$) is the length function with respect to the corresponding generating set of $\Gom$ (resp. of $\G_{\sigma\omega}$).
\end{lem}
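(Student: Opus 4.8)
\textbf{Proof plan for the reduction lemma (Lemma~\ref{L:reduction_lemma}).}

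The plan is to write $g$ as a reduced word in the generators $a,\bOm,\cOm,\dOm$ and to track what happens to the word when we pass to restrictions on the first level. First I would recall the standard structure of reduced words: since $a^2=\bOm^2=\cOm^2=\dOm^2=1$ and $\bOm\cOm=\dOm$ (and cyclic permutations), a reduced word representing an element of $\Gom$ is, up to the relations, an alternating product of the form (possibly starting or ending with $a$)
\begin{equation*}
g = a^{\epsilon_0}\, s_1\, a\, s_2\, a \cdots a\, s_k\, a^{\epsilon_1},
\end{equation*}
where each $s_j \in \{\bOm,\cOm,\dOm\}$, no two consecutive $s_j$'s, and $\epsilon_0,\epsilon_1 \in \{0,1\}$. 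If $l(g)=n$, then the number of letters from $\{\bOm,\cOm,\dOm\}$ occurring is at most $\lceil n/2\rceil$, because between any two such letters there must sit at least one $a$; more precisely $k \le (n+1)/2$.

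Next I would compute the two restrictions $g_0,g_1$ directly from the definition of the action (Figure~\ref{F:def_grig_group}). Since $g$ stabilizes the first level, we may assume $\epsilon_0 = \epsilon_1$ and the word has an even number of $a$'s; the occurrences of $a$ either cancel in pairs against each other or conjugate the generators $s_j$. The key computation is that a single syllable $s_j \in \{\bOm,\cOm,\dOm\}$ has restrictions $(s_j)_0 = I^{i}_\omega \in \{1,a\}$ and $(s_j)_1 = s_{\sigma\omega}$ (one generator of $\G_{\sigma\omega}$, of length at most $1$), while $a$ simply swaps the two subtrees. Carrying these through the alternating word, each block $a\, s_j\, a$ contributes to $g_0$ the restriction $(s_j)_1$ (of length $\le 1$) and to $g_1$ the restriction $(s_j)_0$ (of length $\le 1$), and each block $s_j$ not flanked this way contributes the other restriction; the $a$'s between them disappear. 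Hence both $g_0$ and $g_1$ are products of at most $k$ generators of $\G_{\sigma\omega}$, so $l(g_i) \le k \le (n+1)/2 = (l(g)+1)/2$.

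The main obstacle is the bookkeeping: one has to argue carefully that the factors of length $\ge 2$ never survive in the restrictions — i.e. that the only contributions to $g_i$ come from the $s_j$'s and never from products of two or more generators of $\G_{\sigma\omega}$ stacked on the same subtree. This is where the alternating (no two consecutive $s_j$) form of a reduced word is essential: between consecutive syllables from $\{\bOm,\cOm,\dOm\}$ there is always a letter $a$, which shunts the next syllable's nontrivial restriction to the \emph{other} subtree, so no subtree ever accumulates two consecutive generators. A clean way to present this is by induction on $k$ (the number of syllables from $\{\bOm,\cOm,\dOm\}$), peeling off a leading block of the form $a^{\epsilon}s_1 a$ and applying the inductive hypothesis to the remainder, which lies in $\G_\omega$ or $\G_{\sigma\omega}$ as appropriate. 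I would also handle separately the trivial small cases ($k=0$, i.e. $g \in \langle a\rangle$, where $g=1$ and the bound is immediate) and note that the $+1$ in the bound is exactly what is needed to cover the case of an odd length word with a leftover $a$ at the end.
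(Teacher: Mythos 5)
Your proposal is correct and is essentially the same approach as the paper's: the paper simply invokes the standard contraction lemma (a straightforward generalization of \cite[Lemma VIII.46]{DelaHarpe2000}), whose proof is exactly your syllable-counting argument — write a geodesic word in the alternating form $a^{\epsilon_0}s_1 a s_2 a\cdots a s_k a^{\epsilon_1}$ with $s_j\in\{\bOm,\cOm,\dOm\}$, note $k\leq (l(g)+1)/2$, and observe that each syllable contributes a factor of length at most one to each restriction while the letters $a$ only route these factors to one subtree or the other. One small inaccuracy that does not affect the bound: stabilizing the first level forces only that the total number of $a$'s, namely $k-1+\epsilon_0+\epsilon_1$, is even, not that $\epsilon_0=\epsilon_1$ (e.g. $a s_1 a s_2$ stabilizes the first level), and evenness is all your bookkeeping actually uses.
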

\begin{proof}
It is a straightforward generalization of \cite[Lemma VIII.46]{DelaHarpe2000}.
\end{proof}

We can now compute explicitly the stabilizers of $\Gamma_+$ and $\GammaMinusZero$.
\begin{prop} \label{P:stab_gamma}
Let $\omega$ be a sequence without repetition. Then,  
\begin{align*}
\stabOm(\Gamma_+) &= \gen{a,u_{\omega,1}} \iso D_8\\
\stabOm(\GammaMinusZero) &= \gen{\bOm,\cOm,\dOm} \iso \Z/2\Z \times \Z/2\Z
\end{align*}
where $u_{\omega,1}$ is the generator in $\{\bOm,\cOm,\dOm\}$ such that $u_{\omega,1} 0^\infty = 0^\infty$, or explicitly,
\begin{equation*}
u_{\omega,1} = 
\begin{cases}
\bOm & \omega_1 = 2 \\
\cOm & \omega_1 = 1 \\
\dOm & \omega_1 = 0
\end{cases}.
\end{equation*}
\end{prop}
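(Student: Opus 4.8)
The plan is to compute the two stabilizers simultaneously by induction on word length, using the recursive structure captured in Lemma~\ref{L:stab_projections} together with the reduction estimate Lemma~\ref{L:reduction_lemma}. Concretely, I would consider the three subgroups
\[
A_\omega = \stabOm(\Gamma_+), \qquad B_\omega = \stabOm(\GammaMinusZero), \qquad C_\omega = \stabOm(\Gamma_+) \cap \stabOm(\GammaMinusZero),
\]
and prove by simultaneous induction on $l(g)$ that: every $g \in A_\omega$ lies in $\gen{a, u_{\omega,1}}$; every $g \in B_\omega$ lies in $\gen{\bOm,\cOm,\dOm}$; and every $g \in C_\omega$ lies in the (order $2$) subgroup one expects, namely $\gen{\bOm \cOm}$ when $\omega_1 = 2$ and similarly otherwise — in any case, $C_\omega$ is the cyclic group generated by the product of the two non-trivial generators among $\{\bOm,\cOm,\dOm\}$ that act by $a$ on the first coordinate. (One should first check the easy \emph{inclusions} $\supseteq$ directly: $a$ and $u_{\omega,1}$ visibly preserve $\Gamma_+$ using the parity description of Proposition~\ref{P:Gamma_description}, and $\bOm,\cOm,\dOm$ all preserve $\GammaMinusZero$; then verify the isomorphism types $D_8$ and $\Z/2\Z \times \Z/2\Z$ via the known relations in Grigorchuk groups, e.g. $a^2 = u_{\omega,1}^2 = (a u_{\omega,1})^4 = 1$.)

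For the induction step, take $g$ in one of the three subgroups with $l(g) \ge 1$. If $g$ does not stabilize the first level, write $g = a(g_0,g_1)$; this case only occurs for $A_\omega$ and $C_\omega$ (an element of $B_\omega$ that swaps the two halves lands in case (3) of Lemma~\ref{L:stab_projections}, so I would handle $C_\omega \subseteq B_\omega$ there), and after multiplying by $a$ we are reduced to the level-stabilizing case. If $g$ stabilizes the first level, write $g = (g_0, g_1)$; by Lemma~\ref{L:reduction_lemma} each $g_i$ has length at most $(l(g)+1)/2 < l(g)$ once $l(g) \ge 2$, and Lemma~\ref{L:stab_projections} tells us precisely which stabilizer(s) over $\sigma\omega$ the $g_i$ belong to. Since $\omega$ has no repetition, $\sigma\omega$ also has no repetition, so the inductive hypothesis applies at level $\sigma\omega$ and pins down $g_0, g_1$ to an explicit short list. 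The final move is to reassemble: from the list of admissible pairs $(g_0,g_1)$, together with the requirement that $g$ itself lie in the claimed stabilizer, read off that $g$ must be one of the finitely many words in $\gen{a,u_{\omega,1}}$ (resp. $\gen{\bOm,\cOm,\dOm}$). The base cases $l(g) \le 1$ are a finite check on generators.

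The main obstacle I anticipate is the bookkeeping in the reassembly step: Lemma~\ref{L:stab_projections} gives one-directional information ($g \in \stabOm(\cdot) \Rightarrow g_i \in \stab_{\sigma\omega}(\cdot)$), but to conclude I need the converse control, i.e. I must check that the only \emph{pairs} $(g_0,g_1)$ from the allowed sublists that actually assemble into an element stabilizing $\Gamma_+$ (resp. $\GammaMinusZero$) are the expected ones. This requires going back to the defining recursions of $\bOm,\cOm,\dOm$ (the branching $\bOm(1x) = 1 b_{\sigma\omega}(x)$, etc.) and matching them against the output of the induction — and it is exactly here that the no-repetition hypothesis does real work, since it controls which of $b,c,d$ acts trivially versus as $a$ at each level and hence prevents the stabilizers from growing. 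A secondary subtlety is making sure the three-fold simultaneous induction is set up so that each of the three statements is only ever invoked at strictly smaller length; tracking this carefully (and separating the $l(g)=1$ base case, where Lemma~\ref{L:reduction_lemma} is not yet a strict decrease) is the part most prone to error.
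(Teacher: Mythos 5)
Your plan is essentially the paper's own proof: a simultaneous induction on word length over all repetition-free sequences, using Lemma~\ref{L:stab_projections} to place the restrictions $g_0,g_1$ in stabilizers over $\sigma\omega$, Lemma~\ref{L:reduction_lemma} to make them shorter, a finite base-case check, and then the reassembly step you flag as the crux, which the paper carries out exactly as you describe, by writing level-stabilizing elements in the generating set $\gen{\bOm,\cOm,\dOm,a\bOm a,a\cOm a,a\dOm a}$ of $\stabOm(1)$ and using $\omega_1\neq\omega_2$ to see which generators are compatible with the constrained restrictions. Two small fixes: the intersection you carry along is $\gen{u_{\omega,1}}$ (for $\omega_1=2$ this is $\gen{\cOm\dOm}=\gen{\bOm}$, not $\gen{\bOm\cOm}$, which equals $\gen{\dOm}$), and since the non-level-stabilizing case costs a multiplication by $a$ before Lemma~\ref{L:reduction_lemma} gives a strict length decrease, the base case must be verified up to length $2$ (as the paper does in Figure~\ref{F:induction_stab_Gamma}), not only length $1$.
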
 
\begin{proof}
We will proceed by induction on the length of the elements of the group  in parallel for the two parts of the proposition and all the sequences $\omega$ without repetition. 

Let us begin by some notations. We denote by $u_{\omega,n}$ the generator in $\{\bOm,\cOm,\dOm\}$ such that $u_{\omega,n}1^n0^\infty = 1^n0^\infty $, or explicitly,
\begin{equation*}
u_{\omega,n} = 
\begin{cases}
\bOm & \omega_n = 2 \\
\cOm & \omega_n = 1 \\
\dOm & \omega_n = 0
\end{cases}.
\end{equation*}
For the initial step, we verify the statements by hand for the elements of length smaller or equal to 2. It is clear that $\gen{a,u_{\omega,1}} \subset \stabOm(\Gamma_+)$ and $\gen{\bOm,\cOm,\dOm} \subset \stabOm(\GammaMinusZero)$. For all the other elements, an example of a vertex that comes out of the sets is given in Figure~\ref{F:induction_stab_Gamma}.
\begin{figure}
\centering
\begin{subfigure}{1\textwidth}
\centering
\begin{tikzpicture}
\node[sommet] (11) at (-4,0) {};
\node[nom]  at (-4,-0.75) {$110^\infty$};
\node[sommet] (01) at (-2,0) {};
\node[nom]  at (-2,-0.75) {$010^\infty$};
\node[sommet] (0) at (0,0) {};
\node[nom]  at (0,-0.75) {$0^\infty$};
\node[sommet] (10) at (2,0) {};
\node[nom]  at (2,-0.75) {$10^\infty$};
\node[sommet] (101) at (4,0) {};
\node[nom]  at (4,-0.75) {$1010^\infty$};
\node[style=empty] (L) at (-6,0) {};
\node[style=empty](R) at (6,0) {};
\draw (101) to (R) ;
\draw (10) to [bend right]  node[label] {$u$} (101);
\draw (10) to [bend left]  node[label] {$u_{\omega,1}$} (101);
\draw (10) to [loop_above]  node[label_above] {$u_{\omega,2}$} (10);
\draw (101) to [loop_above]  node[label_above] {$u_{\omega,2}$} (101);
\draw (0) to  node[label] {$a$} (10);
\draw (0) to [loop_above] node[label_above] {$u_{\omega,1}$} (0);
\draw (0) to [bend right] node[label] {$u_{\omega,2}$}  (01) ;
\draw (0) to [bend left] node[label] {$u$} (01) ;
\draw (01) to [loop_above] node[label_above] {$u_{\omega,1}$} (01) ;
\draw (01) to  node[label] {$a$} (11);
\draw (11) to [loop_above] (11) ;
\draw (11) to [bend right]  (L) ;
\draw (11) to [bend left] (L) ;
\end{tikzpicture}
\caption{The neighbourhood of $0^\infty$ on the graph $\Gamma_\omega$. The third generator which is not $u_{\omega,1} or u_{\omega,2}$ is denoted by $u$.}
\end{subfigure}
\par\bigskip
\begin{subfigure}[b]{0.4\textwidth}
\centering
\begin{tabular}{c | c}
$u_{\omega,2}$ 	& $0^\infty$ \\ \hline
$w$ 	& $0^\infty$ \\ \hline
$au_{\omega,2}$ 	& $0^\infty$ \\ \hline
$aw$ 	& $0^\infty$ \\ \hline
$u_{\omega,2}a$ 	& $10^\infty$ \\ \hline
$wa$ 	& $10^\infty$ \\
\end{tabular}
\caption{For every element of $\Gom$ of length smaller to $2$ which is not in $\gen{a,u_{\omega,1}}$, we exhibit a vertex of $\Gamma_+$ whose image is in $\Gamma_+^c$.}
\end{subfigure}
\hfill
\begin{subfigure}[b]{0.4\textwidth}
\centering
\begin{tabular}{c | c}
$a$ 	& $10^\infty$ \\ \hline
$au_{\omega,1}$ 	& $1010^\infty$ \\ \hline
$au_{\omega,2}$ 	& $10^\infty$ \\ \hline
$aw$ 	& $1010^\infty$ \\ \hline
$u_{\omega,1}a$ 	& $10^\infty$ \\ \hline
$u_{\omega,2}a$ 	& $10^\infty$ \\ \hline
$wa$ 	& $10^\infty$ \\
\end{tabular}
\caption{For every element of $\Gom$ of length smaller to $2$ of $\Gom$ which is not in $\gen{u_{\omega,1},u_{\omega,2},u}$, we exhibit a vertex of $\GammaMinusZero$ whose image is in $\GammaMinusZero^c$.}
\end{subfigure}
\caption{Initial step for the elements of small length.} \label{F:induction_stab_Gamma}
\end{figure}

We now suppose that, for a fixed integer $n\geq3$ and all the admissible sequences $\omega$, all the elements of length at most $n$ are in the desired subgroups. Recall that the stabilizer of the first level of the tree has an explicit generating set:
\begin{equation*}
 \stabOm(1) = \gen{\bOm,\cOm,\dOm,a\bOm a,a\cOm a,a \dOm a}.
\end{equation*}
The restrictions of these generators are 
\begin{align*}
\bOm &= (I_\omega^2,b_{\sigma\omega}) \\
\cOm &= (I_\omega^1,c_{\sigma\omega}) \\
\dOm &= (I_\omega^0,d_{\sigma\omega}) \\
a\bOm a &= (b_{\sigma\omega},I_\omega^2) \\
a \cOm a &= (c_{\sigma\omega},I_\omega^1) \\
a \dOm a &= (d_{\sigma\omega},I_\omega^0).
\end{align*}

Let $g$ be an element of $\Gom$ of length $n+1$. We note that if $\omega$ is a sequence without repetition, $\sigma\omega$ is also. There are three cases to prove.
\begin{enumerate}
\item{\textbf{If $g$ is in $\stabOm(\Gamma_+)$.}} 
We notice that $g$ is in $\stabOm(\Gamma_+)$ if and only if $ag$ also is. By supposing that $g$ can be of length $n+2$, we can assume that $g$ stabilizes the first level of the tree. By Lemmas~\ref{L:stab_projections} and \ref{L:reduction_lemma}, $g_0$ and $g_1$ are of length at most $n$ and are in $\stab_{\sigma \omega}(\GammaMinusZero)$. Using induction's hypothesis, $g_0$ and $g_1$ are in $\gen{b_{\sigma \omega},c_{\sigma \omega},d_{\sigma \omega}}$. As the restriction $g_0$ does not contain the generator $a$ in its writing, the element $g$ can only contain the generator $u_{\omega,1}$ of the set $\{\bOm, \cOm, \dOm\}$. In the same way, the restriction $g_1$ does not contain the generator $a$ and then the element $g$ can only contain the generator $au_{\omega,1}a$ of the set $\{a \bOm a, a\cOm a, a\dOm a\}$. Therefore, $g$ is in $\gen{u_{\omega,1},au_{\omega,1}a}$, but we may have multiply $g$ by $a$, then $g$ is in $\gen{a, u_{\omega,1}}$.

\item{\textbf{If $g$ is in $\stabOm(\GammaMinusZero)$ and stabilizes the first level.}}
Combining Lemmas~\ref{L:stab_projections} and \ref{L:reduction_lemma}, the restriction $g_0$ is in $\gen{a,u_{\sigma \omega,1}}$ and the restriction $g_1$ is in $\gen{u_{\sigma \omega,1}}$. The form of $g_0$ allows the element $g$ to contain only the generator $au_{\omega,2} a$ of the set $\{a b_{\sigma \omega} a,a c_{\sigma \omega}a,a d_{\sigma \omega}a \}$ because this is the only one which projects on $u_{\sigma \omega,1}$. The restriction $g_1$ does not contain the generator $a$, then the element $g$ can only contain the element $a u_{\omega,1} a$ of $\{a b_{\sigma \omega} a,a c_{\sigma \omega}a,a d_{\sigma \omega}a \}$. As $\omega_1 \neq \omega_2$, the element $a u_{\omega,1} a$ is not equal to the element $au_{\omega,2} a$ and then $g$ is in $\gen{\bOm, \cOm , \dOm }$.

\item{\textbf{If $g$ is in $\stabOm(\GammaMinusZero)$ and does not stabilize the first level.}}
Once again by combining Lemmas~\ref{L:stab_projections} and \ref{L:reduction_lemma}, the restrictions $g_0$ and $g_1$ are in $\gen{u_{\sigma \omega,1}}$. The form of $g_0$ allows only the generators $\{u_{\omega,1}, a u_{\omega,2} \}$ and $g_1$ only the generators $\{u_{\omega,2}, a u_{\omega,1} \}$. As $\omega_1 \neq \omega_2$, these generators are different and then $g$ is equal to $1$.
\end{enumerate}

For the explicit isomorphisms, it is left to the reader to generalize the case of $\G_{(012)^\infty}$ proved in \cite[VIII.B.10]{DelaHarpe2000} and \cite[VIII.B.16]{DelaHarpe2000}.
\end{proof}
The second step of the proof is to show that it is also difficult to stabilize subsets which differ only on a finite number of vertices with $\Gamma_+$.
\begin{prop}\label{P:Finite_symm_finite_stab}
Let $\omega$ be a sequence without repetition and $\Omega$ a subset of vertices of $\Gamma$ such that $\Omega \triangle \Gamma_+$ is finite. Then $\stab_\omega(\Omega)$ is finite.
\end{prop}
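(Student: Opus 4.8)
The plan is to reduce the statement about $\stab_\omega(\Omega)$, for an arbitrary finite perturbation $\Omega$ of $\Gamma_+$, to the two explicit finite stabilizers computed in Proposition~\ref{P:stab_gamma}, using a ``descent along the tree'' argument analogous to the one in the proof of that proposition. The key observation is that if $g \in \stab_\omega(\Omega)$ and $\Omega \Delta \Gamma_+$ is finite, then $g$ \emph{almost} stabilizes $\Gamma_+$ in the sense that $g\Gamma_+ \Delta \Gamma_+$ is finite (indeed $g\Gamma_+ \Delta \Gamma_+ \subseteq (g\Gamma_+ \Delta g\Omega) \cup (g\Omega \Delta \Omega) \cup (\Omega \Delta \Gamma_+) = g(\Gamma_+\Delta\Omega) \cup \varnothing \cup (\Omega\Delta\Gamma_+)$, using $g\Omega=\Omega$ and that $\Gom$ preserves cardinalities of subsets). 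So the real task is: the set of $g \in \Gom$ with $|g\Gamma_+ \Delta \Gamma_+|$ bounded by a fixed constant is finite.

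First I would make precise a ``near-stabilizer'' notion. For $N \in \N$ let $S_N = \{ g \in \Gom : |g\Gamma_+ \Delta \Gamma_+| \le N \}$; this is not a subgroup but it is symmetric ($g \in S_N \Leftrightarrow g^{-1} \in S_N$) and satisfies $S_N \cdot S_M \subseteq S_{N+M}$. It suffices to show each $S_N$ is finite, since $\stab_\omega(\Omega) \subseteq S_N$ for $N = 2|\Omega\Delta\Gamma_+|$ by the computation above. To bound $|S_N|$ I would argue by induction on $N$, or more robustly, set up a descent on the tree mimicking Proposition~\ref{P:stab_gamma}: given $g \in S_N$, after possibly multiplying by $a$ assume $g$ stabilizes the first level; write $g = (g_0,g_1)$. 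Using Lemma~\ref{L:prefix_gamma} one checks that $g_0$ and $g_1$ are near-stabilizers of $\GammaMinusZero$ up to a controlled finite error — precisely, the ``defect'' sets $g_i\GammaMinusZero \Delta \GammaMinusZero$ have size bounded in terms of $N$ (each mismatch $x$ with $0x$ or $1x$ landing outside the prescribed set corresponds, via the dictionary of Lemma~\ref{L:prefix_gamma}, to a mismatch of $g$ on $\Gamma_+$, and the two halves $\{0y\}$, $\{1y\}$ partition the relevant part of $V(\Gamma)$). Similarly one gets the $\GammaMinusZero \leftrightarrow \Gamma_+$ version of the near-stabilizer descent from parts (2),(3) of Lemma~\ref{L:stab_projections}. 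Combining this with the length-halving Lemma~\ref{L:reduction_lemma}: if $g$ has length $\ell$ then $g_0,g_1$ have length at most $(\ell+1)/2$, so iterating $\lceil \log_2 \ell\rceil$ times produces restrictions of bounded length (at most $2$, say) which lie in near-stabilizers with a defect bound that grows only polynomially in the depth — and then I would invoke Proposition~\ref{P:stab_gamma} together with a finiteness-of-genuine-stabilizer argument to pin down the bottom-level data, from which $g$ is reconstructed up to finitely many choices at each of the finitely many levels of the descent.

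The cleanest way to organize the bottom of the induction is probably: show directly that there is a constant $R$ (depending on $N$) such that every $g \in S_N$ has length $\le R$; since $\Gom$ is generated by four elements this immediately gives $|S_N| < \infty$. To get the length bound, suppose $g \in S_N$ has length $\ell$ and run the descent: at depth $k$ the restrictions along the $2^k$ first-level-type subtrees have length $\approx \ell/2^k$ and are near-stabilizers of $\Gamma_+$ or $\GammaMinusZero$ with defect $\le f(N,k)$ for some explicit $f$ growing subexponentially (the defects add across the $2^k$ subtrees but the bookkeeping of Lemma~\ref{L:prefix_gamma} keeps them controlled since the subtrees partition the vertex set); choosing $k \sim \log_2 \ell$ forces a restriction of length, say, $\ge 3$ that is a near-stabilizer of $\Gamma_+$ with a \emph{finite} defect — and here I would show, by the same vertex-chasing as in Figure~\ref{F:induction_stab_Gamma} but allowing a bounded number of ``exceptional'' vertices, that a long element cannot even approximately stabilize $\Gamma_+$, contradiction. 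The main obstacle I anticipate is exactly this last point: controlling how the finite symmetric-difference defect propagates under restriction and making sure it does not blow up along the $\log_2 \ell$ levels of descent — i.e. proving the quantitative refinement of Lemma~\ref{L:stab_projections} for near-stabilizers rather than honest stabilizers. Once that bookkeeping is in place, everything else is a routine adaptation of the proof of Proposition~\ref{P:stab_gamma}.
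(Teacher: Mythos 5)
There is a genuine gap, and it is located exactly where you anticipate it. Your first move is to discard the exact invariance $g\Omega=\Omega$ and retain only the consequence $|g\Gamma_+\Delta\Gamma_+|\le N$, i.e.\ to reduce the proposition to the finiteness of the sets $S_N=\{g\in\Gom : |g\Gamma_+\Delta\Gamma_+|\le N\}$. But $|g\Gamma_+\Delta\Gamma_+|$ is precisely the distance $d(v_0,gv_0)$ in the $1$-skeleton of $\X$, so ``every $S_N$ is finite'' is equivalent to metric properness of $\Gom\actsGroup\X$ (at least at $v_0$, and hence everywhere), which the paper explicitly leaves as an open question in its final section. So your intermediate target is strictly stronger than the proposition itself, and the sketch does not close it: under restriction to subtrees the \emph{length} halves (Lemma~\ref{L:reduction_lemma}) but the \emph{defect} does not decrease --- it can concentrate entirely in one subtree at every level --- so the descent only produces shorter elements with the same defect bound $N$, which is no contradiction unless one already knows that elements of moderate length cannot have small defect; that is the same statement again. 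The final step you invoke (``a long element cannot even approximately stabilize $\Gamma_+$ with a bounded number of exceptional vertices'') is therefore not a routine adaptation of the vertex-chasing in Proposition~\ref{P:stab_gamma}: it is the whole difficulty, and nothing in the proposal supplies it.

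The paper's proof avoids near-stabilizers altogether by keeping the exact equality $g\Omega=\Omega$ all the way down. Choose an even $n$ so large that every element of the finite set $\Omega\Delta\Gamma_+$ is of the form $x'0^\infty$ with $x'$ of length $n$. Then for every level-$n$ prefix $x$ one has the exact dictionary $xy\in\Omega\Leftrightarrow y\in\Gamma_+$ (if $x0^\infty\in\Omega$) or $xy\in\Omega\Leftrightarrow y\in\GammaMinusZero$ (if $x0^\infty\notin\Omega$), by the parity argument behind Proposition~\ref{P:Gamma_description} and Lemma~\ref{L:prefix_gamma}. Consequently, for $g$ in $K=\stabOm(\Omega)\cap\stabOm(n)$ each level-$n$ restriction $g_x$ lies in the \emph{honest} stabilizer $\stab_{\sigma^n\omega}(\Gamma_+)$ or $\stab_{\sigma^n\omega}(\GammaMinusZero)$, which is finite by Proposition~\ref{P:stab_gamma}; since an element of $\stabOm(n)$ is determined by its level-$n$ restrictions, $K$ embeds in a finite product and is finite, and $K$ has finite index in $\stabOm(\Omega)$ because $\stabOm(n)$ has finite index in $\Gom$. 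No quantitative control of defects, no length bound, and no properness-type statement is needed. I would rewrite your argument along these lines; as it stands, the proposal proves the proposition only modulo a claim that is at least as hard as the paper's open question.
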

\begin{proof}
We denote by $\Lambda$ the symmetric difference $\Omega \triangle \Gamma_+$. This is a finite set, therefore there exists an even integer $n$ such that, for every $x$ in $\Lambda$, there exists $x' \in \{0,1\}^n$ such that $x=x'0^\infty$.

Let $x$ be an arbitrary prefix of length $n$ such that $x0^\infty$ is an element of $\Omega$ and let $y$ be an element of $\{0,1\}^\infty$. We claim that
\begin{equation*}
xy \in \Omega \text{ if and only if } y \in \Gamma_+.
\end{equation*}

The claim is trivial if $y = 0^\infty$. Now, let us assume that $y$ is not equal to $0^\infty$. As the length of $x$ is even, the parities of the positions of the last 1 of $y$ and of $xy$ are equal. Then $y$ is in $\Gamma_+$ if and only if $xy$ is also in $\Gamma_+$. Moreover, the element $xy$ is in $\Omega$ if and only if $xy$ is in $\Gamma_+$. Indeed, all the elements of the symmetric difference $\Lambda$ have a prefix of length shorter or equal than $n$ which is not the case of $xy$ as $y$ is not equal to $0^\infty$. The claim is therefore true.

In the same way, let $x$ be a prefix of length $n$ such that $x0^\infty$ is not an element of $\Omega$ and let $y$ be an element of $\{0,1\}^\infty$, we claim that  
\begin{equation*}
xy \in \Omega \text{ if and only if } y \in \GammaMinusZero.
\end{equation*}

Let $K= \stabOm(\Omega) \cap \stabOm(n)$ be the subgroup of elements which stabilizes $\Omega$ as well as the $n$\th{} level of the tree. For every vertex $x$ of the $n$\th{} level of the tree and every element $g$ of $K$, the restriction $g_x$ is an element of $\stab_{\sigma^n\omega}(\Gamma_+) \cup \stab_{\sigma^n\omega}(\GammaMinusZero)$. 

Indeed, if $x0^\infty$ is in $\Omega$, the following equivalences are true for all $y \in \Gamma_+$:
\begin{align*}
y \in \Gamma_+ 	&\Leftrightarrow	 xy \in \Omega \\
				&\Leftrightarrow	 g(xy) \in \Omega \\
				&\Leftrightarrow	 xg_x(y) \in \Omega \\
				&\Leftrightarrow	 g_x(y) \in \Gamma_+
\end{align*}
and then $g_x$ is in $\stab_{\sigma^n\omega}(\Gamma_+)$. In the same way, if $x0^\infty$ is not in $\Omega$: 
\begin{align*}
y \in \GammaMinusZero 	&\Leftrightarrow	 xy \in \Omega \\
										&\Leftrightarrow	 g(xy) \in \Omega \\
										&\Leftrightarrow	 xg_x(y) \in \Omega \\
										&\Leftrightarrow	 g_x(y) \in \GammaMinusZero
\end{align*}
and then $g_x$ is in $\stab_{\sigma^n\omega}(\GammaMinusZero)$.
Therefore, we can define the following embedding:
\begin{equation*}
K \hookrightarrow \prod_{v \in \{0,1\}^n} \left( \stab_{\sigma^n\omega}(\Gamma_+) \cup \stab_{\sigma^n\omega}(\GammaMinusZero) \right).
\end{equation*}
We proved in Proposition~\ref{P:stab_gamma} that $\stab_{\sigma^n\omega}(\Gamma_+)$ and $\stab_{\sigma^n\omega}(\GammaMinusZero)$ are finite and then $K$ is also finite. As $K$ is a subgroup of $\stabOm(\Omega)$ of index at most $2^n$, the stabilizer is finite. 
\end{proof}
\begin{cor}\label{C:size_stab_vertices}
Let $\omega$ be a sequence without repetition, $v$ a vertex of $\X$ with a support $\Lambda$ and $n$ an even integer such that the prefixes of the elements of $\Lambda \triangle \Gamma_+$ are at most of length $n$. Then,
\begin{equation*}
|\stabOm(v) | \leq 8 \cdot 4 \cdot 4^n 
\end{equation*}
\end{cor}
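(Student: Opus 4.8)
The plan is to bound $|\stabOm(v)|$ by combining the embedding constructed in the proof of Proposition~\ref{P:Finite_symm_finite_stab} with the explicit computation of $\stabOm(\Gamma_+)$ and $\stabOm(\GammaMinusZero)$ in Proposition~\ref{P:stab_gamma}. Recall that the support $\Lambda$ of $v$ satisfies: by hypothesis the elements of $\Lambda \Delta \Gamma_+$ have prefixes of length at most $n$, so $\stabOm(v) = \stabOm(\Lambda)$ is finite by Proposition~\ref{P:Finite_symm_finite_stab}, and we just need to make the counting in that proof quantitative.

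First I would recall from the proof of Proposition~\ref{P:Finite_symm_finite_stab} that $K = \stabOm(\Lambda) \cap \stabOm(n)$ embeds in $\prod_{v \in \{0,1\}^n}\bigl(\stab_{\sigma^n\omega}(\Gamma_+) \cup \stab_{\sigma^n\omega}(\GammaMinusZero)\bigr)$. By Proposition~\ref{P:stab_gamma} we have $|\stab_{\sigma^n\omega}(\Gamma_+)| = |D_8| = 8$ and $|\stab_{\sigma^n\omega}(\GammaMinusZero)| = |\Z/2\Z \times \Z/2\Z| = 4$, and since $\gen{\bOm,\cOm,\dOm} = \stab_{\sigma^n\omega}(\GammaMinusZero)$ sits inside $\gen{a,u_{\sigma^n\omega,1}} = \stab_{\sigma^n\omega}(\Gamma_+)$ (the former is generated by two of the $b,c,d$ generators, both of which lie in $D_8$), the union $\stab_{\sigma^n\omega}(\Gamma_+) \cup \stab_{\sigma^n\omega}(\GammaMinusZero)$ is just $\stab_{\sigma^n\omega}(\Gamma_+)$, which has $8$ elements. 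Hence $|K| \leq 8^{2^n}$ — but that bound is far too weak; the point is that $K$ does not merely embed as a set, it embeds as a \emph{group}, and more importantly one must track how $K$ maps simultaneously onto the factors. I would instead argue more carefully: an element of $K$ is determined by its $2^n$ restrictions, but these restrictions are not independent. The cleaner route is to bound $K$ directly by noting that $\stabOm(\Lambda) / K$ injects into the symmetric group acting on the $n$-th level that also preserves $\Lambda$, hence $|\stabOm(\Lambda) : K| \leq 2^n$ (as stated in the proof, $K$ has index at most $2^n$ in $\stabOm(\Lambda)$).

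The main obstacle is getting the factor $8 \cdot 4 \cdot 4^n$ rather than something like $8^{2^n} \cdot 2^n$. I expect the intended argument is the following refinement: rather than using the crude product embedding, one observes that $K$ acts on $\Lambda$ level by level, and an element of $K$ of length at least some bound must, by Lemma~\ref{L:reduction_lemma}, have small restrictions, so iterating the structure of Lemma~\ref{L:stab_projections} one level at a time (not all $n$ levels at once) shows $\stabOm(\Lambda)$ injects into $\stabOm(\Gamma_+) \times \prod_{k=1}^{n} (\text{a 2-element "sign" choice at level } k)$ or similar, giving $|\stabOm(\Lambda)| \leq 8 \cdot \prod_{k=1}^{n} (\text{something bounded})$. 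Concretely, I would set up an inductive claim: for $0 \le k \le n$, the subgroup $\stabOm(\Lambda) \cap \stabOm(k)$ embeds into $\prod_{v \in \{0,1\}^k} \stab_{\sigma^k\omega}(\Gamma_+)$ \emph{but} its image lies in the ``diagonal-like'' subset coming from a single element of $\Gom$, so that $|\stabOm(\Lambda) \cap \stabOm(k)| \le 8 \cdot 4^{k}$ (the $8$ from the $D_8$ at the deepest relevant level, and a factor $4$ per level for the choice of how $a$'s distribute, matching the index-$2$ jumps at each of the two children); then the index-$2^n$ bound of $K$ in $\stabOm(\Lambda)$ upgrades $8 \cdot 4^n$ to at most $8 \cdot 4 \cdot 4^n$ once one accounts for the top levels not covered, or more simply one takes $k = n$ and multiplies by the extra factor $4$ coming from the index bound $|\stabOm(\Lambda):K| \le 2^n$ being improved to $\le 4 \cdot 2^{n-1}$ or absorbed. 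I would then conclude
\begin{equation*}
|\stabOm(v)| = |\stabOm(\Lambda)| \leq |\stabOm(\Lambda):K| \cdot |K| \leq 2^n \cdot 8 \cdot 4^{?},
\end{equation*}
and reconcile the exponents with the claimed $8 \cdot 4 \cdot 4^n$; pinning down exactly where the single extra factor of $4$ comes from (versus $2$) is the delicate bookkeeping step, and I would handle it by being explicit about $\stab_{\sigma^n\omega}(\Gamma_+) \cong D_8$ having the subgroup $\stab_{\sigma^n\omega}(\GammaMinusZero)$ of index $2$, so that the ``union'' in the embedding of $K$ contributes $8$ per factor while the collapsing of the product to a single group element contributes the geometric series $4^0 + \cdots$ bounded by $4^n$, and the residual first-level $D_8$ versus the index-$2^n$ quotient together give the prefactor $8 \cdot 4$.
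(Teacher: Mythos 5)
Your proposal does not actually establish the bound, so there is a genuine gap. You correctly observe that the raw embedding $K=\stabOm(\Lambda)\cap\stabOm(n)\hookrightarrow\prod_{x\in\{0,1\}^n}\bigl(\stab_{\sigma^n\omega}(\Gamma_+)\cup\stab_{\sigma^n\omega}(\GammaMinusZero)\bigr)$ by itself only gives a bound of the shape $8^{2^n}\cdot 2^n$, but the replacement you offer is a speculative level-by-level induction (an unproved claim $|\stabOm(\Lambda)\cap\stabOm(k)|\le 8\cdot 4^k$, a ``diagonal-like'' image, an index bound to be ``improved or absorbed'') none of which is carried out; you explicitly leave open the ``delicate bookkeeping step'' of where the factor $8\cdot 4\cdot 4^n$ comes from and end by saying the exponents still need to be reconciled. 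That reconciliation is precisely the content of the corollary, so nothing is proved. For comparison, the paper's own proof is a one-line ``direct computation'' using exactly the ingredients you started from and then set aside: the embedding of $K$ from Proposition~\ref{P:Finite_symm_finite_stab}, the cardinalities $|D_8|=8$ and $|\Z/2\Z\times\Z/2\Z|=4$ from Proposition~\ref{P:stab_gamma}, and the index $2^n$ of $\stabOm(n)$; it does not perform the induction you sketch, so your instinct that extra structure is needed may be legitimate, but an instinct plus an unproved inductive claim is not a proof of the stated inequality.

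There is also a concrete error in the counting you do attempt: $\stab_{\sigma^n\omega}(\GammaMinusZero)=\gen{b_{\sigma^n\omega},c_{\sigma^n\omega},d_{\sigma^n\omega}}$ is \emph{not} contained in $\stab_{\sigma^n\omega}(\Gamma_+)=\gen{a,u_{\sigma^n\omega,1}}$, so the union is not just $D_8$. Their intersection is only $\{1,u_{\sigma^n\omega,1}\}$: the two generators among $b,c,d$ other than $u_{\sigma^n\omega,1}$ have first-level section equal to $a$, whereas every element of $\gen{a,u_{\sigma^n\omega,1}}$ that fixes the first level has sections lying in $\{1,u_{\sigma^{n+1}\omega,1}\}$ (for the first Grigorchuk group, $b\notin\gen{a,d}\cong D_8$); this is also consistent with the paper's later use of $\stabOm(\Gamma_+)\cap\stabOm(\GammaMinusZero)=\{1,\dOm\}$ in the proof of Theorem~\ref{T:Properness_action}. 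The union therefore has $10$ elements, not $8$. A minor further point: $K$ need not be normal in $\stabOm(\Lambda)$, so one should speak of the index (number of cosets) rather than of a quotient group injecting into a symmetric group; the index bound is all that is used, but the phrasing should be corrected.
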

\begin{proof}
It is a direct computation using the embedding in the proof of Proposition~\ref{P:Finite_symm_finite_stab} , the cardinals of $D_8$ and $\Z/2\Z \times \Z / 2\Z$ and $[\Gom,\stabOm(n)]=2^n$.
\end{proof}

We can now prove the main theorem of this section.
\begin{mainthm}\label{T:Properness_action}
Let $\omega$ be an element of $\{0,1,2\}^\infty$ without repetition. Then the action of $\Gom$ on $\X$ is proper and faithful.
\end{mainthm}
\begin{proof}
The properness is a consequence of Proposition~\ref{P:Finite_symm_finite_stab} and the second part of Theorem~\ref{T:main_A}.

To prove that the action is faithful, we need to show that 
\begin{equation*}
\smashoperator[r]{\bigcap_{\substack{\Omega \subset V(\Gamma) \\ |\Omega \triangle V(\Gamma)|<\infty}}} \stabOm(\Omega) = \bigcap_{v \in \V} \stabOm(v)  = \{1\}. 
\end{equation*}
By Proposition~\ref{P:stab_gamma}, the intersection $\stabOm(\Gamma) \cap \stabOm(\GammaMinusZero) = \{1, \dOm\}$. Moreover, $\dOm$ does not stabilize $\Gamma_+ \setminus\{0^\infty, 1010^\infty\}$ because $\dOm 101^\infty = 10^\infty$. Then, the intersection above is trivial.
\end{proof}
\begin{rem}
The faithfulness is an automatic consequence of the properness of the action for the groups $\Gom$ which are just infinite, like the first Grigorchuk group $\G_{(012)^\infty}$ or, more generally, all the $\Gom$ with $\omega$ containing an infinite number of 0, 1 and 2, see \cite{Bartholdi2003}.
\end{rem}
We can further refine the description of the stabilizers.
\begin{prop}\label{P:subgroup_stab}
Let $\omega$ be a sequence without repetition and $H$ be a finite subgroup of $\G_\omega$. Then, there exists a vertex $v$ of $\X$ such that $H < \stabOm(v)$.
\end{prop}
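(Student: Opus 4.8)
The goal is to realize an arbitrary finite subgroup $H < \Gom$ as a subgroup of some cube-stabilizer $\stabOm(v)$. Equivalently, since $\stabOm(v) = \stabOm(\Omega)$ where $\Omega = \supp v$, we must produce a subset $\Omega \subset V(\Gamma)$ with $\Omega \Delta \Gamma_+$ finite (so that $v \in \V$) and $h\Omega = \Omega$ for all $h \in H$. The plan is to build $\Omega$ as an $H$-saturation of a cofinite modification of $\Gamma_+$, using the fact that $H$ is finite to control the damage done far from $0^\infty$.

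First I would choose a level $n$ large enough that $H$ stabilizes the $n$\th{} level of the tree: since $H$ is finite and Grigorchuk groups are residually finite — indeed the level stabilizers $\stabOm(n)$ form a descending chain with trivial intersection — some $\stabOm(n)$ contains $H$ (alternatively, one can take $n$ with $2^n \geq |H|$ and note $H \cap \stabOm(n)$ has bounded index, then pass to that subgroup and handle the finitely many coset representatives separately; but the clean route is $H \leq \stabOm(n)$). Having fixed such an $n$, every $h \in H$ acts on each subtree rooted at a level-$n$ vertex $v$ via its restriction $h_v \in \G_{\sigma^n\omega}$. Now decompose $V(\Gamma)$ according to the length-$n$ prefix: each $x \in V(\Gamma)$ other than short ones is $v y$ with $v \in \{0,1\}^n$ and $y$ a one-sided sequence, and the action of $h \in \stabOm(n)$ sends $vy$ to $v h_v(y)$.

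The construction of $\Omega$ then proceeds prefix-by-prefix. For each $v \in \{0,1\}^n$, look at the finite group $H_v = \{h_v : h \in H\} \subset \G_{\sigma^n\omega}$ acting on the sequences with prefix $v$; I want to pick, inside the "$v$-block", an $H_v$-invariant subset that agrees with $\Gamma_+$ (restricted to that block, under the prefix identification) up to a finite set. The key point — and the main obstacle — is to show that each $H_v$ admits an invariant subset of $V(\Gamma)$ commensurable to $\Gamma_+$: for this one uses that $H_v$ is a finite group of automorphisms, so for any starting set $S$ the symmetric differences $\{h_v S \Delta S : h_v \in H_v\}$ are all finite when $S$ is commensurable to a commensurated set, and then $\Omega_v := \bigcap_{h_v \in H_v} h_v S$ (or the union) is $H_v$-invariant and still commensurable to $S$ because a finite intersection of cofinite-difference modifications of $S$ only changes $S$ by a finite set. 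Taking $S$ to be the "block restriction" of $\Gamma_+$, which by Proposition~\ref{P:Gamma_description} and the parity bookkeeping of Lemma~\ref{L:prefix_gamma} is itself either a translate of $\Gamma_+$ or of $\GammaMinusZero$ depending on whether $v$ is "even" or "odd", we get an $H_v$-invariant $\Omega_v$ with $\Omega_v \Delta S$ finite.

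Finally, assemble $\Omega = \bigsqcup_{v \in \{0,1\}^n} \Omega_v$ (thought of back inside $V(\Gamma)$ via the prefix identification), together with whatever finite adjustment is needed among the finitely many short vertices so that the whole thing is genuinely $H$-invariant — here one uses that $H$ permutes the level-$n$ vertices trivially, so $h$ maps the $v$-block to itself and acts there by $h_v$, hence $h\Omega = \bigsqcup_v h_v \Omega_v = \bigsqcup_v \Omega_v = \Omega$. Since each $\Omega_v \Delta S$ is finite and there are only $2^n$ blocks, $\Omega \Delta \Gamma_+$ is finite, so the coloring $v$ with $\supp v = \Omega$ is a vertex of $\X$ by Proposition~\ref{P:Stab_connected_comp}. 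Then $H \subset \stabOm(\Omega) = \stabOm(v)$, as desired. I expect the delicate step to be the honest verification that the prefix-block restriction of $\Gamma_+$ is commensurable to a single commensurated model set on which $H_v$ acts, so that the finite intersection trick applies uniformly across all $2^n$ blocks; the parity analysis already packaged in Lemma~\ref{L:prefix_gamma} should make this routine rather than painful.
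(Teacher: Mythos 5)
Your core mechanism is right, but the scaffolding you hang it on contains a false step. You claim that, because the level stabilizers $\stabOm(n)$ form a descending chain with trivial intersection, some $\stabOm(n)$ contains the finite subgroup $H$. This is exactly backwards: triviality of $\bigcap_n \stabOm(n)$ forces $H \cap \stabOm(n) = \{1\}$ for all sufficiently large $n$, so no level stabilizer contains a nontrivial finite subgroup (already $H = \langle a \rangle$ fails to stabilize the first level). Your fallback --- pass to $H \cap \stabOm(n)$ and ``handle the finitely many coset representatives separately'' --- collapses for the same reason: for large $n$ that intersection is trivial, so the coset representatives are all of $H$ and nothing has been reduced. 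Consequently the prefix-block decomposition, which needs every $h \in H$ to send each level-$n$ block to itself and act there by its restriction $h_v$, is not available as you set it up; one could repair it by letting $H$ permute the blocks and saturating over block orbits, but that is extra work your write-up does not do.

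The fix is to drop the reduction to levels entirely and run your saturation trick once, globally --- which is exactly the paper's proof. Set $\Lambda = \bigcup_{h \in H} h\Gamma_+$ (the intersection works equally well). Since $H$ is a group, $h'\Lambda = \Lambda$ for every $h' \in H$, and since the first half of the proof of Proposition~\ref{P:Stab_connected_comp} shows that $g\Gamma_+ \Delta \Gamma_+$ is finite for \emph{every} $g \in \Gom$, with no hypothesis that $g$ stabilizes any level, the set $\Lambda \Delta \Gamma_+ \subseteq \bigcup_{h \in H}\left(h\Gamma_+ \Delta \Gamma_+\right)$ is a finite union of finite sets, hence finite. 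So $\chi_\Lambda$ is a vertex of $\X$ stabilized by $H$. In other words, the two-line argument you describe inside each block already proves the proposition when applied once to the whole of $\Gamma_+$; the level-$n$ bookkeeping is both unnecessary and, as justified, unavailable.
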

\begin{proof}
It is direct an application of Proposition~\ref{P:bounded_orbits_fix_vertex}
\end{proof}
\begin{cor}
Let $\omega$ be a sequence without repetition. The stabilizers of the vertices of $\X$ for the action of $\Gom$ are arbitrarily large.
\end{cor}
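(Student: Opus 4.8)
The plan is to deduce this from Proposition~\ref{P:subgroup_stab} together with the elementary structure theory of Grigorchuk groups. It suffices to prove that, for $\omega$ without repetition, $\Gom$ contains finite subgroups of arbitrarily large order. Granting this, fix $N\in\N$ and a finite subgroup $H<\Gom$ with $|H|\geq N$; Proposition~\ref{P:subgroup_stab} produces a vertex $v$ of $\X$ with $H<\stabOm(v)$, hence $|\stabOm(v)|\geq N$. Since each $\stabOm(v)$ is finite by Proposition~\ref{P:proper}, this gives $\sup_{v\in\V}|\stabOm(v)|=\infty$, as claimed.

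To produce finite subgroups of $\Gom$ of unbounded order I would work on the tree $\Tbin$. As $\omega$ has no repetition it is not eventually constant, so $\Gom$ is a weakly branch group: the rigid stabilizer $\mathrm{rist}_{\Gom}(v)$ of each vertex $v$ of $\Tbin$ — the subgroup of elements acting trivially outside the subtree hanging below $v$ — is nontrivial, see \cite{Grigorchuk2006}. The rigid stabilizers of the $2^{k}$ vertices of level $k$ have pairwise disjoint supports, so they commute and generate their direct product; thus $\prod_{|v|=k}\mathrm{rist}_{\Gom}(v)$ embeds in $\Gom$. It then remains to exhibit an involution in each $\mathrm{rist}_{\Gom}(v)$. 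For $v$ the vertex $0$ (level $1$) this is immediate from the recursions of Figure~\ref{F:def_grig_group}: among $a\bOm a=(b_{\sigma\omega},I_\omega^2)$, $a\cOm a=(c_{\sigma\omega},I_\omega^1)$, $a\dOm a=(d_{\sigma\omega},I_\omega^0)$, exactly one has trivial second coordinate, and its first coordinate is an element of order $2$ of $\G_{\sigma\omega}$ which is nontrivial because $\sigma\omega$ is not the constant sequence $\omega_1^{\infty}$ (guaranteed by $\omega_2\neq\omega_1$); conjugating by $a$ gives an involution in $\mathrm{rist}_{\Gom}$ of the vertex $1$. The general case follows by iterating the construction down the tree, using that $\sigma^{k}\omega$ is again without repetition and the branch property to push the resulting deep involutions back up into $\Gom$. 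Hence $(\Z/2\Z)^{2^{k}}$ embeds in $\Gom$ for every $k$, far more than needed. (For the first Grigorchuk group $\G_{(012)^\infty}$ the existence of such elementary abelian subgroups is classical, see \cite{DelaHarpe2000,Grigorchuk2006}.)

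The main obstacle is the second paragraph, and more precisely making the construction of finite subgroups uniform across all sequences without repetition, in particular for the non-torsion groups such as $\G_{(01)^\infty}$ for which no single cyclic subgroup can do the job; there one genuinely needs the (weakly) branch property of these groups, or an explicit construction of involutions supported on arbitrarily deep subtrees of $\Tbin$. Everything else is a one-line consequence of Proposition~\ref{P:subgroup_stab}.
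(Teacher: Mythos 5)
Your reduction is exactly the paper's: combine Proposition~\ref{P:subgroup_stab} with the existence of arbitrarily large finite subgroups of $\Gom$. The paper disposes of the latter in one line, by invoking that the orders of the (finite-order) elements of $\Gom$ are unbounded, so that arbitrarily large finite \emph{cyclic} subgroups exist; you instead try to manufacture elementary abelian subgroups $(\Z/2\Z)^{2^k}$ from the branch structure. That is a legitimate alternative route in principle, but as written it has a genuine gap, and it is exactly the one you flag. First, the assertion that every $\Gom$ with $\omega$ without repetition is weakly branch is quoted rather than proved, and it needs a reference (or an argument) valid for the whole subfamily, in particular for the non-torsion members such as $\G_{(01)^\infty}$; the paper never establishes this. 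Second, and more importantly, weak branchness only gives a \emph{nontrivial} element in each rigid stabilizer; for the non-torsion groups such an element may a priori have infinite order, and commuting elements of infinite order with disjoint supports generate an infinite group, not a finite subgroup. Your explicit level-one computation does produce involutions in the two level-one rigid stabilizers, but iterating it requires lifting an involution of $\G_{\sigma^k\omega}$ supported below a level-one vertex of its tree to an element of $\Gom$ supported below a level-$(k+1)$ vertex of $\Tbin$, and this lifting is precisely the (regular) branch property you have not verified -- surjectivity of the section maps alone does not give it. So the key step ``arbitrarily large finite subgroups exist in every $\Gom$ without repetition'' is not yet proved in your argument.

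A smaller point: your parenthetical claim that for $\G_{(01)^\infty}$ ``no single cyclic subgroup can do the job'' amounts to asserting that torsion elements there have bounded order; you give no argument for this, and it contradicts the very fact the paper's own proof relies on (unboundedness of the orders of torsion elements). Either justify large finite cyclic subgroups as the paper does -- this is where a precise reference or a short argument is needed, especially for the non-torsion $\Gom$ -- or complete the branch-theoretic construction by proving that rigid stabilizers at every level contain involutions; as it stands, neither is done, whereas the first paragraph of your proposal is fine and coincides with the paper's argument.
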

\begin{proof}
The order of the elements of $\Gom$ is not bounded, then there exist finite arbitrarily large finite subgroups in $\Gom$. By the previous proposition, we can also find arbitrarily large stabilizers of vertices.
\end{proof}
It is possible to describe the distortion of infinite order elements.
\begin{cor}
Let $\omega$ be a sequence without repetition. Then, all infinite order elements of the group $\Gom$ are undistorted.
\end{cor}
\begin{proof}
It is a direct application of Haglund’s axis theorem \cite[Theorem 1.6]{Haglund2023}. 
\end{proof}
\section{Classifying space of proper actions}
\begin{defn}
A classifying space of proper actions of a group $G$, denoted by $\EG$, is a topological space which admits a proper $G$-action and with the following property: if $X$ is any space with a proper $G$-action, then there exists a $G$-equivariant map $f: X \rightarrow \EG$ and any two $G$-equivariant maps  $X \rightarrow \EG$ admit a $G$-equivariant homotopy between them.
\end{defn}
As proved in Theorem~\ref{T:Properness_action}, the cube complex $\X$ is a topological space which admits a proper $\Gom$-action if $\omega$ does not contain repetition. We will show that it satisfies the property above.
\begin{thm}\label{T:model}
Let $\omega \in \{0,1,2\}^\infty$ be a sequence without repetition. Then the CAT(0) cube complex $\X$ is a model for $\EG$, the classifying space of proper actions of $\Gom$.
\end{thm}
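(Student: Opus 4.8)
The plan is to invoke the standard characterization of models for $\EG$: a $G$-CW-complex (or, more generally, a $G$-space with enough regularity) is a model for $\underline{E}G$ if and only if for every finite subgroup $H \leq G$ the fixed point set $X^H$ is contractible (in particular nonempty), while $X^H = \emptyset$ for all infinite subgroups $H$. This is the classical criterion, see for instance \cite{Valette2002}. Since $\X$ is a CAT(0) cube complex, it is in particular a CAT(0) space, and the Bruhat--Tits fixed point theorem guarantees that any group acting on a complete CAT(0) space with a bounded orbit fixes a point. The issue of completeness is real here, because $\X$ is not locally finite and hence not a complete metric space; however, the relevant arguments only need to produce a fixed \emph{vertex}, and the machinery set up in the proof of the Proposition preceding Theorem~\ref{T:bounded_orbit} (embedding $\V$ into $\ell^2(V(\Gamma))$, where centers of bounded sets exist and land back on vertices) is exactly what lets one bypass completeness.

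First I would verify $\X^H \neq \emptyset$ for every finite $H \leq \Gom$: this is precisely Proposition~\ref{P:subgroup_stab}, which produces a vertex $v = \chi_\Lambda$ with $\Lambda = \bigcup_{h \in H} h\Gamma_+$ fixed by $H$. Next I would show $\X^H = \emptyset$ for infinite $H$: if $H$ fixed a point $p$ of $\X$, then $H$ would stabilize the (unique) cube $C$ whose interior contains $p$, hence $H \leq \stabOm(C)$, which is finite by Proposition~\ref{P:proper}; contradiction. So the nonemptiness pattern is already settled by results in the excerpt. The heart of the proof is then to show that for finite $H$ the fixed set $\X^H$ is contractible. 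Here I would use the fact that the fixed point set of a group of cubical isometries acting on a CAT(0) cube complex is again a CAT(0) cube complex, or at least a convex subcomplex after passing to the cubical subdivision, and that nonempty convex subsets of CAT(0) spaces are contractible (they are geodesically convex, hence contractible via geodesic contraction toward any point). Concretely: $\X^H$ is a CAT(0) subspace of $\X$, it is nonempty by the previous step, and any nonempty complete CAT(0) space — or even just any nonempty convex subset of a CAT(0) space — is contractible. One must check that $\X^H$ is genuinely convex in $\X$ (fixed point sets of isometries of CAT(0) spaces are convex, since the midpoint of two fixed points is fixed by uniqueness of geodesics and of midpoints) and that it inherits enough structure to run the contraction; since $\X$ restricted to any finite-diameter piece is a genuine CAT(0) metric space, the geodesic between two points of $\X^H$ stays in $\X^H$ and the straight-line homotopy $F(x,t) = \gamma_x(t)$ toward a chosen basepoint is continuous.

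I expect the main obstacle to be the lack of completeness and local finiteness of $\X$, which prevents a direct citation of the usual ``CAT(0) implies model for $\underline{E}G$'' theorem (e.g. as stated for proper CAT(0) spaces). The way around it is to observe that the only places completeness is used are (i) existence of a fixed point for finite $H$, which is already handled by Proposition~\ref{P:subgroup_stab} via an explicit construction, and (ii) geodesic convexity/contractibility of $\X^H$, which is a local statement: any two points of $\X^H$ lie in a common finite subcomplex, which \emph{is} a complete CAT(0) space, so the geodesic joining them exists and lies in $\X^H$, and contractibility of $\X^H$ follows by the convex-combination homotopy. A secondary technical point is to confirm that fixed sets of cubical group actions are subcomplexes of the cubical subdivision (so that $\X^H$ is a CW-complex, making the $\underline{E}G$ criterion applicable in the CW category); this is standard for actions by cubical automorphisms and I would state it with a reference rather than reprove it.
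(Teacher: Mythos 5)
Your proposal is correct in substance but follows a genuinely different route from the paper. The paper does not analyze fixed-point sets at all: it invokes the reformulation of universality in \cite[Prop 1.8]{Baum1994}, which reduces the statement to two checks, namely (a) every finite subgroup of $\Gom$ fixes a point of $\X$, which is Proposition~\ref{P:subgroup_stab} exactly as you use it, and (b) the two projections $p_0,p_1 : \X \times \X \rightarrow \X$ are $\Gom$-equivariantly homotopic, which the paper settles with the straight-line (geodesic) homotopy $h((x_0,x_1),t) = t x_0 + (1-t)x_1$, justified by convexity of $\X$. Your route instead uses the classical $G$-CW criterion ($\X^H$ contractible for finite $H$, empty for infinite $H$): the emptiness step via Proposition~\ref{P:proper} and the unique cube containing a fixed point in its interior is fine, and the contractibility step via convexity of fixed-point sets is sound, with the completeness worry correctly defused by the observation that any two vertices (indeed any two points) of $\X$ lie in a common finite-dimensional cube, so geodesics exist and the contraction is a local matter. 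What each approach buys: the paper's criterion sidesteps the $G$-CW structure, the cubical subdivision, and any discussion of $\X^H$ beyond nonemptiness, concentrating all the topology in the single convexity claim for $\X$; your criterion is more standard and self-contained (no appeal to the Baum--Connes--Higson reformulation) but requires the extra bookkeeping you flag — that $\X$ becomes a $\Gom$-CW-complex after subdivision and that $\X^H$ is a convex, hence contractible, subcomplex — and note that the geodesic-contraction argument you need for $\X^H$ is essentially the same geodesic structure the paper already uses for its homotopy, so neither route avoids that point.
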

\begin{proof}
We use the reformulation of the universality definition proved in \cite[Prop 1.8]{Baum1994}. Applied to our situation, it means that in order to prove that CAT(0) cube complex is a model for $\EGom$, it is enough to verify the following two points.
\begin{enumerate}
\item If $H$ is a finite subgroup of $\Gom$, then there exists a point $x$ in $\X$ which is fixed by $H$.
\item View $\X \times \X$ as a space endowed with the usual diagonal action $g(x_0, x_1) = (gx_0, gx_1)$. Denote by $p_0,p_1: \X \times \X \rightarrow \X$ the two projections on each component, then there exists a $\G$-equivariant homotopy between $p_0$ and $p_1$. 
\end{enumerate}
The first point is a consequence of Proposition~\ref{P:subgroup_stab}. For the second one, we define the following homotopy 
\begin{equation*}
h((x_0,x_1),t) = t x_0 + (1-t) x_1.
\end{equation*}
This is well-defined as $\X$ is convex.
\end{proof}

\section{Further directions of research}
A group action $G \actsGroup X$ of a group $G$ on a metric space $X$ is called \emph{metrically proper} if $d(x,g_nx) \rightarrow \infty$ for every infinite family $\{g_n\}$ of elements of $\G$ and every point $x$ in $X$. This property is strictly stronger than the definition of proper action used above if the space $X$ is not locally compact. For example, consider the Cayley graph of a free group generated by a countable infinite family of generators and the canonical associated action of the group on it. All the stabilizers are trivial hence the action is proper, but all the generators send the identity at distance $1$ and so it is not metrically proper. We don't know if the actions $\Gom \actsGroup \X$ are metrically proper.
\begin{question}
Are there any sequences $\omega$ for which the action of $\Gom$ on $\X$ is metrically proper?
\end{question}
Our intuition tells us that this is the case for the groups $\Gom$ where the action is proper. A positive answer will give an elegant proof of the Haagerup property for these groups by \cite{Cherix2004}. This property is known for all Grigorchuk groups using the fact that they have subexponential growth, whereas here we would have a more elementary proof.

It is not clear to us if the condition on the sequence $\omega$ appearing in Theorem~\ref{T:Properness_action} is purely technical or if it reflects a real difference in the behaviour of actions. Indeed, we had never seen a condition of this type appear before in the study of Grigorchuk groups. It would be interesting to study the actions of the groups whose sequences have repetitions and to see if these are proper. However, it seems to us that the proof presented above is unlikely to be generalized to these cases.
\begin{question}
Are the actions of the groups $\Gom$ proper if the sequence $\omega$ has a repetition?
\end{question}

On Remark~\ref{R:dimension}, we explain why the groups $\Gom$ which are 2-groups cannot act without bounded orbit on a CAT(0) cube complex of locally finite dimension. However, we do not know any obstruction of the existence of such an action for groups with elements of infinite order.

\begin{question}
Let $\Gom$ a group which contains an element of infinite order. Is it possible to construct an action of $\Gom$ on a CAT(0) cube complex of (locally) finite dimension without bounded orbit?
\end{question}
Using the Tits' alternative proved by Caprace and Sageev \cite{Caprace2011} for groups acting on finite dimensional CAT(0) cube complexes and the fact that the $\Gom$ are amenable, the previous question can be reduced to: 
\begin{question}
May the $\Gom$ virtually surject onto $\Z$?
\end{question}

There are several notions of boundaries of a CAT(0) cube complex, like the simplicial boundary \cite{Hagen2018,Hagen2013}, the Roller boundary (which  for a cube complex $X$ coincides with $\overline{X} \setminus X$) \cite{Roller1998} and the Poisson-Furstenberg boundary \cite{Fernos2018b,Fernos2018,Nevo2013}. An interesting question would be the study of the actions we have built on these boundaries and to understand what can be deduced from it.

\begin{question}
Is it possible to understand the action of $\Gom$ on the boundaries of $\X$?
\end{question}

Grigorchuk groups are examples of branch groups \cite{Bartholdi2003}. There are other classes of branch groups which share properties used in the construction of $\X$ and tools used in the proof (Schreier graph with more than one end, restriction of the action on the subtrees, reduction lemma, etc.). 
\begin{question}
Is it possible to do the same construction for other finitely generated branch groups with a Schreier graph with at least 2 ends to obtain an action without bounded orbit, faithful, proper on a CAT(0) cube complex?
\end{question}

\section{acknowledgements}
The author is thankful to T. Nagnibeda for the idea of this project, the fruitful discussions and the careful rereading, to R. Grigorchuk, P-H. Leeman, D. Francoeur, P. Bagnoud, A. Vallette, A. Genevois and N. Matte Bon for helpful comments and ideas. We would also like to thank the anonymous referees for their valuable suggestions, which have significantly changed the presentation of this paper.

\bibliography{proper_action_grigorchuk_groups_CCC.bib}   
\bibliographystyle{abbrv}

\end{document}